\newcommand{\Or}{\mathrm{O}}
\newcommand{\SO}{\mathrm{SO}}
\newcommand{\Vol}{\mathrm{Vol}}
\renewcommand{\H}{\mathbb{H}}
\newcommand{\Q}{\mathbb{Q}}
\newcommand{\Z}{\mathbb{Z}}
\newcommand{\R}{\mathbb{R}}
\newcommand{\Tor}{\mathrm{Tor}}
\newcommand{\GL}{\mathrm{GL}}
\newtheorem{theorem}{Theorem}[section]
\newtheorem{lemma}[theorem]{Lemma}
\newtheorem{corollary}[theorem]{Corollary}
\newtheorem{prop}[theorem]{Proposition}
\newtheorem{conj}[theorem]{Conjecture}
\newtheorem{rem}[theorem]{Remark}
\newtheorem{quest}[theorem]{Question}
\crefname{theorem}{Theorem}{Theorems}
\crefname{lemma}{Lemma}{Lemmas}
\crefname{corollary}{Corollary}{Corrollaries}
\crefname{prop}{Proposition}{Propositions}
\crefname{conj}{Conjecture}{Conjectures}
\crefname{rem}{Remark}{Remarks}
\crefname{quest}{Question}{Questions}
\crefname{section}{Section}{Sections}
\crefname{equation}{Equation}{Equations}
\crefname{figure}{Figure}{Figures}
\newcommand{\legendre}[2]{\genfrac{(}{)}{}{}{#1}{#2}}
\title[Embedding closed 3-manifolds in small volume hyperbolic 4-manifolds]{Embedding closed hyperbolic 3-manifolds in small volume hyperbolic 4-manifolds}
\author{M. Chu}
\author{A. W. Reid}
\address{\newline
Department of Mathematics,\newline
University of Illinois at Chicago,\newline
Chicago, IL 60607, USA.}
\email{michu@uic.edu }
\address{\newline
Department of Mathematics,\newline
Rice University,\newline
Houston, TX 77005, USA}
\email{alan.reid@rice.edu}
\thanks{First author supported by NSF grant DMS $1803094$ and the second author  by NSF grant DMS $1812397$}
\begin{document}

\begin{abstract}
In this paper we study existence and lack thereof of closed embedded orientable co-dimension one totally geodesic submanifolds of minimal volume cusped orientable hyperbolic manifolds.
\end{abstract}


\keywords{cusped hyperbolic manifold, totally geodesic, arithmetic}

\maketitle

%
%
%
%

\section{Introduction}
\label{intro}
Let $W^n$ denote a minimal volume, orientable, cusped hyperbolic $n$-manifold. In this paper we will be concerned with the existence of closed embedded orientable totally geodesic hyperbolic submanifolds $M^{n-1} \hookrightarrow W^n$.

When $n=2$, $W^2$ is either the thrice-punctured sphere or a once-punctured torus, and in the former case there are no such submanifolds, whilst in the latter there are many. In dimension $3$, by work of Cao and Meyerhoff,
\cite{CM}, the manifolds in question are the complement of the figure-eight knot and its sister. These do not contain any closed embedded orientable totally geodesic surfaces (although they do contain infinitely many immersed closed totally geodesic surfaces \cite[Chapter 9]{MR}). Indeed, they do not contain any closed embedded
essential surfaces (see \cite{Th}, \cite{CJR} and \cite{FH}).  These 3-manifolds are arithmetic hyperbolic 3-manifolds, as is the case of the thrice-punctured sphere. The once-punctured torus has a positive dimensional Teichm\"uller space,
but there are a finite number of examples which are arithmetic.  

This paper is concerned with the following conjecture.  As we discuss in \cref{reduce}, \cref{noclosedembedded} is easily reduced to the only non-trivial case, that of dimension $4$.

\begin{conj}
\label{noclosedembedded}
Let $W^n$ denote a minimal volume, orientable, cusped arithmetic hyperbolic $n$-manifold. If $W^n$ contains a co-dimension one, closed embedded orientable totally geodesic submanifold, then $n=2$ and $W^2$ is an arithmetic once-punctured torus.\end{conj}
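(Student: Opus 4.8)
The plan is to reduce the conjecture to dimension $4$ and then to attack that case through the arithmetic of the defining quadratic form together with a careful analysis of when the resulting totally geodesic hypersurface is embedded.

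\textbf{Reducing to dimension four.} In dimension $2$ a codimension-one totally geodesic submanifold is a simple closed geodesic; in the thrice-punctured sphere every essential simple closed curve is peripheral, so none is realized by a closed geodesic, whereas the once-punctured torus carries many, giving exactly the exceptional case in the statement. In dimension $3$, $W^3$ is the figure-eight knot complement or its sister by Cao--Meyerhoff, and a closed embedded orientable totally geodesic surface would be a closed embedded essential surface, which these manifolds do not possess. For $n\ge 5$ I would argue arithmetically. A cusped arithmetic lattice has positive $\Q$-rank, and this forces it to be of simplest type, commensurable with $\SO(q;\Z)$ for a quadratic form $q$ of signature $(n,1)$ defined over $\Q$ and isotropic. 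A closed totally geodesic hypersurface is the image of $v^{\perp}\cap\H^{n}$ for a spacelike rational vector $v$, and it is compact precisely when the restriction $q|_{v^{\perp}}$ --- a form of signature $(n-1,1)$ in $n$ variables --- is anisotropic over $\Q$. By Meyer's theorem every indefinite rational form in at least five variables is isotropic, so for $n\ge 5$ there is no closed totally geodesic hypersurface whatsoever, and the conclusion holds vacuously. This also isolates why $n=4$ is the interesting case: there $q|_{v^{\perp}}$ has signature $(3,1)$ in four variables and may well be anisotropic.

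\textbf{Setting up dimension four.} Now $W^4$ is a minimal volume (hence $\Vol = \tfrac{4\pi^{2}}{3}$, by Gauss--Bonnet with $\chi=1$) orientable cusped arithmetic hyperbolic $4$-manifold; the known such manifolds arise from the ideal $24$-cell and are commensurable with $\mathrm{PO}(4,1;\Z)$, the projective integral isometry group of $q=x_0^2+x_1^2+x_2^2+x_3^2-x_4^2$. Let $\Gamma$ be the corresponding torsion-free lattice. I would parametrize the closed orientable totally geodesic $3$-submanifolds by $\Gamma$-orbits of spacelike rational vectors $v$ for which $q|_{v^{\perp}}$ is anisotropic over $\Q$ (equivalently, $q|_{v^{\perp}}$ fails to be isotropic at some finite prime); such vectors exist in abundance and already yield many \emph{immersed} closed totally geodesic $3$-manifolds. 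Normalizing $\langle v,v\rangle=1$, the decisive geometric dictionary is that the hyperplanes $v^{\perp}$ and $(\gamma v)^{\perp}$ meet transversally in $\H^{4}$ exactly when $\langle\gamma v,v\rangle^{2}<1$, are asymptotic when this equals $1$, and are disjoint when it exceeds $1$.

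\textbf{Embeddedness: the main obstacle.} Embeddedness of the image of $v^{\perp}$ is thus equivalent to the arithmetic separation condition that for every $\gamma\in\Gamma$ either $\gamma v=\pm v$ or $\langle\gamma v,v\rangle^{2}\ge 1$; a single $\gamma$ with $\gamma v\ne\pm v$ and $\langle\gamma v,v\rangle^{2}<1$ forces a self-intersection. My strategy would be to bound the possible $v$ to finitely many $\Gamma$-orbits --- using that an embedded closed hypersurface in a fixed-volume manifold is strongly constrained, together with reduction theory for $\Gamma$ to replace $v$ by a representative of controlled height --- and then, for each surviving candidate, to produce an explicit $\gamma\in\Gamma$ violating separation. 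I expect this final step to be the crux and the genuine difficulty. Unlike compactness, non-embeddedness is not a formal phenomenon: immersed closed totally geodesic $3$-manifolds certainly occur, so the entire content is to rule out the embedded ones, and there seems to be no soft obstruction. It appears to require the fine arithmetic of this single commensurability class --- in practice, a case analysis over the explicit census of minimal-volume $24$-cell manifolds --- which is precisely why dimension $4$ remains conjectural rather than theorem.
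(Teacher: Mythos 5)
You have not proved the statement, and you could not have: it is \cref{noclosedembedded}, an open conjecture, and the paper itself establishes only (i) the reduction to dimension $4$ and (ii) the partial result \cref{main_or} for the $22$ orientable integral congruence two manifolds. Measured against that, your reduction step is correct and is essentially the paper's own argument (\cref{highdim}): a cusped arithmetic lattice is of simplest type, defined by a rational form of signature $(n,1)$, a codimension-one closed totally geodesic submanifold would come from a rational subform of signature $(n-1,1)$ in $n\geq 5$ variables, and Meyer's theorem forces such a form to be isotropic, so the submanifold is noncompact; dimensions $2$ and $3$ are disposed of exactly as in the paper's introduction (peripherality of simple closed curves in the thrice-punctured sphere; Cao--Meyerhoff plus the absence of closed embedded essential surfaces in the figure-eight complement and its sister).

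The genuine gap is the one you concede yourself: in dimension $4$ you offer a program --- bound the candidate spacelike vectors $v$ to finitely many $\Gamma$-orbits, then for each exhibit $\gamma\in\Gamma$ violating the separation condition $\langle\gamma v,v\rangle^{2}\geq 1$ --- and neither half is carried out, nor is it known how to carry it out. It is worth recording how differently the paper obtains its partial results, because its mechanism is much softer and entirely avoids your orbit analysis. For \cref{main_or}: a closed embedded orientable totally geodesic $M$ in a $\chi=1$ manifold cannot separate, since Gauss--Bonnet would make the two complementary pieces have positive integral Euler characteristics summing to $1$; hence $r$ disjoint such hypersurfaces force $b_1\geq r$ (\cref{positiveb1}, \cref{disjoint}). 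The cross-sections of these $24$-cell manifolds are complements of the links $6^3_2$, $8^3_9$, $8^4_2$ (\cref{Xsection}), which contain no closed embedded totally geodesic surface (\cref{small}), so $M$ must miss every orientable cross-section; the $(\Z/2\Z)^4$ reflection symmetries then translate $M$ into several disjoint embedded copies, forcing $b_1(W)\geq 3$ or more, contradicting the census Betti numbers in \cref{RTmanifolds}. In short, the paper substitutes topological counting, made possible by the special symmetries of these examples, for the arithmetic classification you propose; the volume-finiteness idea you allude to appears in the paper only through Basmajian's collar estimates in the proof of \cref{main_1011}, and again only becomes decisive after symmetry multiplies the copies of $M$. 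Finally, a factual correction: your claim that the known minimal volume examples are all commensurable with $\mathrm{PO}(4,1;\Z)$ overstates what is known --- the paper cites Riolo--Slavich for at least one further commensurability class containing an orientable $\chi=1$ cusped manifold, and the finitely many such classes are not classified --- so even a complete execution of your program for the $24$-cell class would not settle the conjecture.
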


In \cite{RT} the authors provide a census of $1171$ so-called integral congruence two hyperbolic $4$-manifolds that are all obtained from face-pairings of the ideal $24$-cell in $\mathbb{H}^4$. These are all commensurable cusped, arithmetic, hyperbolic 4-manifolds of Euler characteristic $1$ (i.e. minimal volume). Amongst
these, only $22$ are orientable, and these are listed in \cref{RTmanifolds} together with some information that we will make use of.  Towards a positive resolution of \cref{noclosedembedded} we prove the following result.
\begin{theorem}
\label{main_or}
Let $W$ denote one of the $22$ manifolds mentioned above. Then $W$ does not contain a closed embedded orientable totally geodesic hyperbolic $3$-manifold.\end{theorem}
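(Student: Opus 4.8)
The plan is to work in the quadratic-form model of these commensurable arithmetic manifolds. Writing $f$ for the diagonal form $\langle 1,1,1,1,-1\rangle$ of signature $(4,1)$ over $\Q$, each $W$ is $\H^4/\Gamma$ for a torsion-free finite-index subgroup $\Gamma$ of $\Or^+(f;\Z)$ determined by the congruence-two and face-pairing data, and all $22$ share the commensurator $\Or(f;\Q)$. A closed embedded orientable totally geodesic $M^3 \hookrightarrow W$ lifts to a $\Gamma$-invariant family of pairwise disjoint geodesic hyperplanes in $\H^4$, each of the form $H_v = v^\perp \cap \H^4$ for a primitive integral spacelike vector $v$ (that is, $f(v) > 0$). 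I would first make this dictionary precise: totally geodesic hypersurfaces correspond to $\Gamma$-orbits of such vectors $v$, with the associated submanifold $M^3 = H_v/\mathrm{Stab}_\Gamma(H_v)$ automatically two-sided and orientable since $W$ is.

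The next step is a closedness criterion. The submanifold attached to $v$ is itself an arithmetic hyperbolic $3$-manifold modelled on the restricted form $f|_{v^\perp}$, a quaternary form of signature $(3,1)$, and by a Godement-type criterion it is compact precisely when $f|_{v^\perp}$ is anisotropic over $\Q$. Since $\det(f|_{v^\perp}) \equiv -f(v) \pmod{(\Q^\times)^2}$ and the form is indefinite, hence isotropic over $\R$, Hasse--Minkowski reduces anisotropy to the $p$-adic conditions that the discriminant be a square and the Hasse invariant be $-1$ at some finite prime. I would run this local analysis for $f$ to pin down exactly which square classes of $f(v)$, together with the accompanying congruence conditions on $v$, can possibly yield a closed hypersurface, producing a short explicit list of candidate $\Or(f;\Q)$-classes of hyperplanes.

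With the candidates in hand, the embedding obstruction is purely geometric: two distinct hyperplanes $H_v$ and $H_w$ cross in $\H^4$ exactly when $\langle v,w\rangle^2 < f(v)f(w)$. Thus $M^3$ fails to embed as soon as the $\Gamma$-orbit of a candidate $v$ contains a vector $w$ with $H_w \neq H_v$ and $\langle v,w\rangle^2 < f(v)^2$, for then the two translates meet transversally and the image in $W$ self-intersects. I would therefore, for each candidate hyperplane, exhibit such a crossing translate using the explicit generators of $\Gamma$ (equivalently, short vectors in the orbit) read off from the census data recorded in \cref{RTmanifolds}.

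The main obstacle I anticipate is concentrated in this last step and is twofold. First, the crossing translates must be found inside the specific $\Gamma$ of each of the $22$ manifolds rather than merely inside $\Or(f;\Z)$, so one needs either careful bookkeeping or a uniform argument exploiting the common structure of these covers of the $24$-cell orbifold in order to descend from non-embeddedness in the orbifold to non-embeddedness in each $W$. Second, one must be sure the candidate list from the Hasse--Minkowski step is complete, so that no exotic anisotropic restriction is overlooked; certifying completeness of the local computation and then matching every candidate against an explicit intersecting pair is where the real work lies.
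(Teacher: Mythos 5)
There is a genuine gap, and it sits exactly where you predicted the ``real work'' lies, but it is worse than bookkeeping: your middle step cannot produce ``a short explicit list of candidate $\Or(f;\Q)$-classes of hyperplanes,'' because that list is infinite. Indeed, the paper itself proves this: \cref{immersedRT} shows that for \emph{every} prime $p\equiv -1 \pmod 8$ the anisotropic form $q_p=\langle 1,1,1,-p\rangle$ occurs as a subform (up to $\Q$-equivalence) of $J$, and \cref{allcusped} upgrades this to infinitely many projectively inequivalent anisotropic subforms of signature $(3,1)$ for any cusped arithmetic $4$-manifold. Each such subform corresponds to a distinct square class of $f(v)$ for spacelike $v$, hence to a distinct candidate class of hyperplanes whose associated hypersurface is closed, and each gives genuine immersed closed totally geodesic $3$-manifolds in all $22$ manifolds. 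So your final step --- exhibiting, for each candidate class and each of the $22$ groups $\Gamma$, a crossing translate --- is an infinite enumeration with no finite certificate. Worse, no uniform argument over the commensurability class can exist: the paper remarks (after the proof of \cref{main_or}) that the commensurable manifold $\mathbb{H}^4/\Gamma(49)$ \emph{does} contain a closed embedded orientable totally geodesic $3$-manifold, so for suitable finite-index subgroups all translates of a candidate hyperplane really are disjoint. Any successful crossing argument must therefore exploit what is special about these particular $22$ manifolds, and your proposal never identifies what that is. (A smaller slip: an embedded totally geodesic hypersurface in an orientable $4$-manifold need not be two-sided or orientable --- the stabilizer of $H_v$ can swap its sides --- though this is harmless here since the theorem only concerns orientable $M$.)

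What the paper actually exploits is the hypothesis you never use: $\chi(W)=1$. Since the volume of a hyperbolic $4$-manifold with geodesic boundary is proportional to its Euler characteristic, an embedded closed totally geodesic $3$-manifold in $W$ cannot separate (\cref{positiveb1}), so its existence forces $b_1(W)>0$, and $r$ disjoint such copies force $b_1(W)\geq r$ (\cref{disjoint}). This kills manifolds $16$--$22$ of \cref{RTmanifolds} outright ($b_1=0$). For the rest, the smallness of the three possible orientable cross-section link complements (\cref{Xsection}, \cref{small}) forces $M$ to be disjoint from every orientable cross-section (\cref{isdisjoint1}); the $(\Z/2\Z)^4$ symmetry group $K$ then produces at least two disjoint copies of $M$, which together with a non-separating cross-section force $b_1(W)\geq 3$, eliminating all remaining manifolds except number $1$, where the three cross-sections give six disjoint copies and the contradiction $b_1(W)>3$. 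The arithmetic of quadratic forms enters the paper only to prove the \emph{positive} statement (\cref{allcusped}) that immersed closed totally geodesic $3$-manifolds abound --- precisely the fact that makes your proposed route unworkable as stated.
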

As remarked upon, the $1171$ integral congruence two hyperbolic $4$-manifolds are all commensurable. In a private communication J. Ratcliffe and S. Tschantz have informed us that 
there are many more manifolds obtained by side-pairings of the ideal $24$-cell in this commensurability class, namely $13,108$ side-pairings of the ideal $24$-cell (up to symmetry of the $24$-cell) yield a cusped hyperbolic 4-manifold of Euler characteristic $1$. Only 
$675$ of these side-pairings provide orientable manifolds (which include the $22$ orientable ones of Theorem \ref{main_or}).  
In addition, in \cite{RiSl}
it is shown that there is at least one more commensurability class of cusped arithmetic hyperbolic $4$-manifolds that contains an orientable cusped hyperbolic $4$-manifold with Euler characteristic $1$. 

At present we cannot say anything about \cref{noclosedembedded} for these other orientable examples, nor
do we have a classification of the finite number of commensurability classes of cusped arithmetic hyperbolic 4-manifolds that contain a manifold with Euler characteristic $1$ (although in principle this should be doable).

Our methods also apply to another situation.  In \cite{I} the author provides an example of a cusped, orientable, hyperbolic 4-manifold of Euler characteristic $2$ that is the complement of five $2$-tori in $S^4$ (with the standard smooth structure, \cite{I2}). This link complement
arises as the orientable double cover of the non-orientable manifold $1011$ in the census of integral congruence two hyperbolic $4$-manifolds mentioned above (see also \cite{I}).
We prove the following result.
\begin{theorem}
\label{main_1011}
Let $W$ be the link complement in $S^4$ described above. Then $W$ contains an embedded orientable totally geodesic cusped hyperbolic $3$-manifold isometric to the complement of the link $8^3_9$ (shown in \cref{fig: links}) in $S^3$, but no closed orientable embedded totally geodesic hyperbolic $3$-manifold.
\end{theorem}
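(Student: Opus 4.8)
The plan is to work throughout with the arithmetic description of this commensurability class. Realize $W = \H^4/\Gamma$, where $\Gamma$ is the orientation-preserving group of the orientable double cover of the census manifold $1011$, sitting (up to commensurability and conjugacy) inside $\mathrm{PO}(f,\Z)$ for the quadratic form $f = \langle 1,1,1,1,-1\rangle$ of signature $(4,1)$ over $\Q$ that is associated to the ideal $24$-cell. The key dictionary is standard: codimension-one totally geodesic submanifolds of $W$ correspond to $\Gamma$-orbits of rational hyperplanes $H_v = v^\perp \cap \H^4$ with $f(v) > 0$, and the associated submanifold $\mathrm{Stab}_\Gamma(H_v)\backslash H_v$ is a finite-volume hyperbolic $3$-manifold, arithmetic of simplest type defined by the restricted form $g_v := f|_{v^\perp}$ of signature $(3,1)$ over $\Q$. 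I would record the crucial dichotomy (via Godement's criterion, since cusps correspond to $\Q$-isotropic vectors): the $3$-manifold $\mathrm{Stab}_\Gamma(H_v)\backslash H_v$ is closed if and only if $g_v$ is anisotropic over $\Q$, and is cusped exactly when $g_v$ is isotropic over $\Q$. This reduces both assertions of the theorem to producing, respectively controlling, the vectors $v$ arising from embedded orientable hyperplanes in $W$.

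\textbf{Existence of the $8^3_9$ complement.} For the positive statement I would exhibit one explicit hyperplane. Choosing a vector $v_0$ normal to one of the natural symmetry hyperplanes of the $24$-cell tessellation, the hyperplane $H_{v_0}$ meets each incident ideal $24$-cell in a totally geodesic ideal octahedron; consequently the induced $3$-manifold is tiled by ideal octahedra and is visibly cusped, consistent with $g_{v_0}$ being isotropic. I would then (i) verify that the $\Gamma$-translates of $H_{v_0}$ are pairwise disjoint or equal, so that the submanifold is embedded, and that the quotient is orientable — a finite combinatorial check inside the side-pairing structure of $W$; and (ii) identify the resulting arithmetic octahedral $3$-manifold as precisely the complement of $8^3_9$ by matching volume, cusp number, and invariant trace field $\Q(i)$ (determined by $\det g_{v_0}$), and then confirming the isometry via an explicit ideal triangulation, e.g.\ in SnapPy.

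\textbf{Nonexistence of a closed example.} For the negative statement I would apply the mechanism of \cref{main_or} to $W$. By the dichotomy above, a closed embedded orientable totally geodesic $\H^3 \hookrightarrow W$ would be some $\mathrm{Stab}_\Gamma(H_v)\backslash H_v$ with $g_v$ \emph{anisotropic} over $\Q$. The plan is to enumerate, up to the symmetries of the $24$-cell and the side-pairings defining $W$, the finitely many $\Gamma$-orbits of hyperplanes $H_v$ that actually yield embedded orientable submanifolds, and for each such $v$ to certify that $g_v$ is isotropic over $\Q$ — either by exhibiting a rational null vector for $g_v$ (equivalently, showing the associated quaternion algebra is $M_2(\Q)$), or geometrically by checking that $H_v$ passes through an ideal vertex of the tessellation so that the submanifold inherits a cusp from $W$. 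Since every embedded orientable totally geodesic hypersurface turns out to be cusped, none is closed.

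\textbf{Main obstacle.} The genuine difficulty lies in controlling \emph{all} embedded orientable totally geodesic hypersurfaces of this one specific $W$: the enumeration of the relevant hyperplane orbits together with the embeddedness and orientability checks is a finite but delicate computation in the combinatorics of the $24$-cell and its side-pairings, and it must be paired with the arithmetic certification that each restricted form $g_v$ is $\Q$-isotropic. I expect this step — rather than the existence construction — to be the crux, with the secondary subtlety being the precise identification in Part~1 of the cusped submanifold as the complement of $8^3_9$ itself, as opposed to merely an abstract arithmetic manifold built from ideal octahedra.
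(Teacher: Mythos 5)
The existence half of your proposal is essentially the paper's argument in different clothing: the cusped hypersurface you describe is one of the coordinate--hyperplane cross-sections of the Ratcliffe--Tschantz construction, which the paper identifies in \cref{XsectionN} via the side-pairing codes of \cite{RT} rather than by computation. Two cautions there. First, embeddedness and orientability are automatic in the paper's route, since the cross-sections are fixed-point sets of the reflection group $K$ acting on the manifold and are shown to be orientable from the codes, whereas in your route they are deferred combinatorial checks. Second, volume, cusp number and invariant trace field cannot pin down $8^3_9$: the complements of $6^3_2$ and $8^3_9$ are commensurable arithmetic manifolds with the same volume $7.3277\ldots$, the same trace field $\Q(i)$, and three cusps each, so the explicit isometry verification you relegate to software is not a refinement but the whole content of that step.

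The nonexistence half has a genuine gap, and it is exactly the step you flag as the ``main obstacle'': the enumeration you propose is not a finite procedure as described, and no idea is offered to make it one. There are infinitely many $\Gamma$-orbits of rational hyperplanes $H_v$ with $g_v$ anisotropic in this commensurability class --- this is precisely what \cref{immersedRT} and \cref{allcusped} of the paper prove, producing infinitely many commensurability classes of immersed closed totally geodesic $3$-manifolds in $W$ --- and you give neither an a priori bound on the norm of a vector $v$ whose hyperplane could project to an \emph{embedded} hypersurface, nor any finiteness theorem that would make your list terminate. As written, ``every embedded orientable totally geodesic hypersurface turns out to be cusped'' is a restatement of the theorem, not a verifiable output of your scheme. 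The paper proves nonexistence with no enumeration at all, using two inputs your outline never touches: (i) the smallness of the cross-section link complements (\cref{small}), which forces any hypothetical closed embedded $M$ to be disjoint from all lifts of the cross-sections (\cref{isdisjoint}), so that the sixteen isometries of $W$ induced by $K$ yield sixteen disjoint embedded copies of $M$; and (ii) the hypothesis $M\subset W\subset S^4$, which forces each copy to separate, so that Basmajian's improved collar estimate for separating hypersurfaces (\cref{volume_in_hood}, \cref{more_vol}) applies. Combined with the Weeks-manifold lower bound \cite{GMM} for the volume of a closed hyperbolic $3$-manifold, this gives $\Vol(W)\geq 16\,\mathcal{V}'(0.94)\approx 28.9$, contradicting $\Vol(W)=\frac{8\pi^2}{3}\approx 26.3$. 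Unless you can supply a substitute for these inputs --- some effective bound reducing your candidate hyperplanes to a finite, checkable list, together with an embeddedness test for each --- your strategy for the negative statement cannot be completed.
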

A simple but elegant argument (see \cite[Proposition 4.10]{I0}) shows that if $X$ is a hyperbolic link complement of $2$-tori in $S^4$, then $\chi(X)=\chi(S^4)=2$, and so
there are {\bf only finitely many} hyperbolic link complements of $2$-tori in $S^4$. A similar statement holds more generally for link complements of $2$-tori and Klein bottles in other fixed $4$-manifolds.
In \cite{IRT}, four additional examples of link complements of $2$-tori were found in manifolds homeomorphic to $S^4$. These arise as the orientable double covers of the non-orientable manifolds
in the census of \cite{RT} with numbers $23$, $71$, $1091$ and $1092$.  In a forthcoming paper \cite{CR}, we will address the existence of closed embedded totally geodesic hyperbolic $3$-manifold in these examples. 
This requires additional techniques, 

By way of comparison, Thurston's hyperbolization theorem shows many links in $S^3$ have hyperbolic complements, and although it is known that many hyperbolic link complements in $S^3$ do not contain a closed embedded totally geodesic surface (e.g. alternating links \cite{MeR}), examples do exist (see \cite{Lei} and \cite{MeR}).  

Finally, we point out if one merely asks for a smooth embedding of a closed orientable $3$-manifold into $S^4$ then there are obstructions; for example it is a result of Hantzsche \cite{Han} that if a closed orientable 3-manifold $M$ embeds in $S^4$, then 
$\Tor(H_1(M,\Z))\cong A\oplus A$ for some finite abelian group $A$. In fact, the Kirby Problem List Question 3.20 \cite{Kir} asks: {\em Under what conditions does a closed, orientable 3-manifold $M$ smoothly embed in $S^4$?}
We refer the reader to \cite{BB} for examples, for more discussion of this question, and additional references.

\section*{Acknowledgements}
{\em The authors would like to thank the Institut de Math\'ematiques, Universit\'e de Neuch\^atel for its hospitality during the early stages of this work. They benefited greatly from many helpful conversations with  A. Kolpakov and S. Riolo.  They would also like to thank D.D. Long and L. Slavich for helpful conversations on topics related to this work. We are most grateful to the referee for helpful comments, and in particular for asking a question that led to the discovery of an oversight in the first
version of this paper.}

\section{Cusped arithmetic hyperbolic manifolds}
\label{s:cusp}

We will mainly work with the hyperboloid model of ${\mathbb H}^n$ defined using the quadratic form $j_n$ defined as $x_0^2+x_1^2+\ldots x_{n-1}^2-x_n^2$; i.e.
$${\mathbb H}^n = \{x=(x_0,x_1,\ldots, x_n) \in {\mathbb R}^{n+1} : j_n(x)= -1, x_{n}>0\}$$
equipped with the Riemannian metric induced from the Lorentzian inner product associated to $j_n$. The full group of isometries of ${\mathbb H}^n$ is then identified with $\Or^+(n,1)$, the subgroup of
\[
\Or(n,1) = \{A \in \GL(n+1,\mathbb{R}) : A^tJ_nA = J_n\},
\]
preserving the upper sheet of the hyperboloid $j_n(x)=-1$, and
where $J_n$ is the symmetric matrix associated to the quadratic form $j_n$. The full group of orientation-preserving isometries is given by $\SO^+(n,1) = \{A\in \Or^+(n,1) : \det(A)= 1\}$.

\subsection{Constructing cusped arithmetic hyperbolic manifolds}\label{cusp}
Cusped, arithmetic, hyperbolic $n$-manifold are constructed as follows (see \cite{VS} for example).
Suppose that $X = \mathbb{H}^n/\Gamma$ is a finite volume cusped hyperbolic $n$-manifold. Then $X$ is arithmetic if $\Gamma$ is commensurable with a group $\Lambda < \SO^+(n,1)$ as described below.

Let $f$ be a non-degenerate quadratic form defined over $\Q$ of signature $(n,1)$, which we can assume is diagonal and has integer coefficients. Then $f$ is equivalent over $\mathbb{R}$ to the form $j_n$ defined above; i.e. there exists $T\in \GL(n+1,\mathbb{R})$ such that $T^tFT = J_n$, where $F$ and $J_n$ denote
the symmetric matrices associated to $f$ and $j_n$ respectively. Then $T^{-1}\SO(f,\Z)T\cap \SO^+(n,1)$ defines the arithmetic subgroup $\Lambda < \SO^+(n,1)$.

Note that the form $f$ is anisotropic (ie does not represent $0$ non-trivially over $\Q$) if and only if the group $\Gamma$ is cocompact, otherwise the group $\Gamma$ is non-cocompact (see \cite{BHCh}).  By Meyer's Theorem  
\cite[\S IV.3.2, Corollary 2]{Se}, the case that $f$ is anisotropic can only occur when $n=2,3$.

\subsection{Reducing \cref{noclosedembedded} to dimension 4}
\label{reduce}

We include a quick proof of the following, likely well-known, result.  Clearly \cref{highdim} then reduces \cref{noclosedembedded} to dimension $4$.

\begin{theorem}
\label{highdim}
Let $X=\mathbb{H}^n/\Gamma$ be a cusped, arithmetic, hyperbolic $n$-manifold with $n\geq 5$.   Then $X$ does not contain any co-dimension one, immersed, closed, totally geodesic hyperbolic manifold. On the other hand it contains infinitely many co-dimension one, immersed, cusped, totally geodesic hyperbolic manifolds.\end{theorem}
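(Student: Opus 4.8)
The plan is to pass to the arithmetic group defining $X$ and translate both assertions into statements about a rational quadratic form, at which point Meyer's Theorem as quoted above does the work. First I would reduce to the case $\Gamma = \Lambda = T^{-1}\SO(f,\Z)T \cap \SO^+(n,1)$, where $f$ is a rational quadratic form of signature $(n,1)$. The existence of an \emph{immersed} closed, respectively cusped, codimension-one totally geodesic submanifold is a commensurability invariant: a finite cover pulls back such a submanifold, and the projection to $X$ of one living in a cover is again immersed totally geodesic of the same type. Hence we may assume $\Gamma = \Lambda$ and, after conjugating by $T$, work directly in the $f$-model with $\SO(f,\Z)$. Since $X$ is cusped, $\Gamma$ is non-cocompact, so by the criterion recalled in \cref{cusp} the form $f$ is isotropic over $\Q$.

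Next I would set up the correspondence between totally geodesic hypersurfaces and rational spacelike vectors. A codimension-one totally geodesic submanifold $M \hookrightarrow X$ lifts to a hyperplane $\mathbb{H}^{n-1} = v^\perp \cap \mathbb{H}^n$ for some spacelike $v$ (i.e. $f(v)>0$), with $M = \mathbb{H}^{n-1}/\Lambda_v$ where $\Lambda_v = \mathrm{Stab}_\Lambda(v^\perp)$. The key step is that finite volume of $M$ forces $v$ to be rational: $\Lambda_v$ is then a lattice in $\mathrm{Stab}_{\SO^+(n,1)}(v^\perp) \cong \SO^+(n-1,1)$, hence Zariski dense in it by Borel's density theorem; as $\Lambda_v$ consists of integer matrices, its Zariski closure, and therefore the line $\R v$ it fixes, is defined over $\Q$, so $v$ may be taken primitive integral. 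Writing $f_v = f|_{v^\perp}$, a form of signature $(n-1,1)$ and rank $n$, the group $\Lambda_v$ is commensurable with $\SO(f_v,\Z)$; consequently $M$ is closed iff $\SO(f_v,\Z)$ is cocompact iff $f_v$ is anisotropic over $\Q$, while $M$ is cusped iff $f_v$ is isotropic.

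For the first assertion, note that when $n \geq 5$ the form $f_v$ is indefinite of rank $n \geq 5$, hence isotropic over $\Q$ by Meyer's Theorem. Thus every finite-volume codimension-one totally geodesic submanifold of $X$ is cusped, and none is closed. For the second assertion it remains to produce infinitely many distinct such submanifolds. Being isotropic, $f$ is universal over $\Q$ and in particular represents infinitely many distinct positive integers; choose primitive integral vectors $v_d$ with $f(v_d) = d$ for an infinite set of values $d$. Each $v_d$ is spacelike, so by Borel--Harish-Chandra $\SO(f_{v_d},\Z)$ is a lattice and $M_{v_d} = \mathbb{H}^{n-1}/\Lambda_{v_d}$ is a finite-volume totally geodesic hypersurface, necessarily cusped by the previous paragraph. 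Since $\Lambda$ preserves the value of $f$, vectors with distinct $f$-values lie in distinct $\Lambda$-orbits, and immersed totally geodesic hypersurfaces correspond bijectively to $\Lambda$-orbits of such hyperplanes; therefore the $M_{v_d}$ are pairwise distinct, giving infinitely many.

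I expect the main obstacle to be the rationality of $v$ forced by finiteness of volume, i.e. the arithmeticity of finite-volume totally geodesic submanifolds of arithmetic manifolds. Everything else is a direct application of the structure theory of rational quadratic forms together with Meyer's Theorem and the reduction theory of $\SO(f,\Z)$.
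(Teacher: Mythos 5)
Your proof is correct, and its first half is essentially the paper's argument with the details filled in: the paper simply asserts that an immersed finite-volume codimension-one totally geodesic submanifold of an arithmetic cusped $X$ ``is arithmetic arising from a quadratic form of signature $(n-1,1)$'' and then invokes Meyer's theorem exactly as you do; your Borel-density/rationality argument for the spacelike vector $v$ is the standard justification of that asserted step. Where you genuinely diverge is in producing infinitely many cusped hypersurfaces. The paper restricts the diagonal form $f$ to a coordinate subform $f_1$ of signature $(n-1,1)$, which is isotropic by Meyer's theorem, obtains from it one non-cocompact arithmetic subgroup $H<\Gamma$, and then appeals to density of the commensurator of $\Gamma$ to generate infinitely many such subgroups. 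You instead exploit isotropy of $f$ directly: $f$ is universal over $\Q$, so there are primitive integral spacelike vectors realizing infinitely many distinct values, and since elements of $\SO(f,\Z)$ preserve both primitivity and the value of $f$, the corresponding hyperplanes lie in pairwise distinct $\Lambda$-orbits and hence project to pairwise distinct immersed hypersurfaces. This buys you something the commensurator argument leaves implicit: an explicit arithmetic invariant (the represented value) certifying distinctness, with no dynamical input. One small point of care: universality gives rational, not integral, representations, so after clearing denominators your primitive vector $v_d$ represents $m^2d$ rather than $d$; if you let $d$ range over squarefree positive integers, the squarefree parts of the values still separate the orbits, so the conclusion stands unchanged.
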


\begin{proof} 
The first part now follows easily from the discussion in \cref{cusp}, since any co-dimension one immersed totally geodesic hyperbolic manifold is arithmetic arising from a quadratic form of signature $(n-1,1)$. Since $n\geq 5$, such a form is isotropic
by Meyers theorem, and so the submanifold in question is non-compact.  

For the last claim we argue as follows. Assume that $f$ is a diagonal form defined over $\Q$ of signature $(n,1)$, $n\geq 5$. Then, we can restrict to a sub-quadratic form $f_1$ of signature $(n-1,1)$, which by hypothesis is isotropic. Using
\cref{cusp} we can use $f_1$ to build a non-cocompact subgroup $H<\Gamma$ with $H$ an arithmetic subgroup of $\Or^+(n-1,1)$. Now use the density of the commensurator of $\Gamma$ to construct infinitely many such groups $H$. \end{proof}

\section{Some background from \texorpdfstring{\cite{RT}}{RT}}\label{s:RTbackground}

\subsection{Integral congruence two hyperbolic 4-manifolds}\label{RTbackground}

For convenience we now set $J=j_4$. The manifolds of \cref{main_or} are all obtained by face-pairings of the regular ideal $24$-cell in ${\mathbb H}^4$ (with all dihedral angles $\pi/2$), and
arise as regular $({\Z}/2{\Z})^4$ covers of the orbifold ${\H}^4/\Gamma(2)$ where $\Gamma(2)$ is the level two congruence subgroup of the group $\Or^+(J,{\Z}) = \Or^+(4,1) \cap \Or(J,{\Z})$. These are the manifolds referred to
as {\em integral congruence two hyperbolic $4$-manifolds} in \cite{RT}.

It will be useful to describe the $({\Z}/2{\Z})^4$ action, and this is best described in the ball model as follows.
Locate the $24$-cell in the ball model of hyperbolic space with vertices 
$$(\pm1,0,0,0), (0,\pm 1,0,0), (0,0,0\pm 1,0), (0,0,0,\pm 1)~\hbox{and}~(\pm\frac{1}{2}, \pm\frac{1}{2}, \pm\frac{1}{2}, \pm\frac{1}{2}).$$ 
The four
reflections in the co-ordinate planes of ${\R}^4$ can be taken as generators of this $({\Z}/2{\Z})^4$ group of isometries. Passing to the hyperboloid model, these reflections are elements of $\Gamma(2)$ and are listed as the first four matrices in \cite[page 110]{RT}. Following \cite{RT} we denote
this $({\Z}/2{\Z})^4$ group of isometries by $K < \Gamma(2)$.
Note from \cite[Table 2]{RT}, only one of the $22$ examples under consideration admits a larger group of isometries (of order $48$) than that given by $K$. In particular,  none of these $22$ manifolds are regular covers
of the orbifold ${\H}^4/\Or^+(J,{\Z})$ (since $[\Or^+(J,{\Z}) : \Gamma(2)]=120$).

As noted in \cite{RT} (see also \cite{RT0}) all of the face-pairings of {\em any} of the integral congruence two hyperbolic $4$-manifolds are invariant under the group $K$. This implies that each of the coordinate hyperplane cross sections of the $24$-cell extends in each of the  integral congruence two hyperbolic $4$-manifolds to a totally geodesic hypersurface which is the fixed point set of one of the reflections described above. Following \cite{RT0} we call these hypersurfaces, {\em cross sections}.
As described in \cite{RT0}, these cross sections, can be identified with integral congruence two hyperbolic $3$-manifolds which are also described in \cite{RT}. Moreover, it is possible to use \cite{RT} to identify these explicitly in any given example.
The following can be deduced from \cite{RT} or \cite{RT0}

\begin{lemma}
\label{Xsection}
Any orientable cross-section is isometric to one of the complement in $S^3$ of the link $6^3_2$ (the Borromean rings), the link $8^3_9$, or the link $8^4_2$ (see \cref{fig: links}).\end{lemma}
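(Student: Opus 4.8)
The plan is to realize every cross section as an explicit hyperbolic $3$-manifold glued from regular ideal octahedra, and then to recognize the orientable ones among the finitely many possibilities. First I would pin down the identification with a quotient of $\mathbb{H}^3$. Write $W=\mathbb{H}^4/\Gamma_W$, where $\Gamma_W\triangleleft\Gamma(2)$ is torsion-free with $\Gamma(2)/\Gamma_W\cong K\cong(\Z/2\Z)^4$, and let $r_i\in K$ be one of the four coordinate reflections, with fixed hyperplane $H_i\subset\mathbb{H}^4$. Then $r_i$ is an isometric involution of $W$, and the associated cross section is the totally geodesic $3$-manifold $\mathrm{Fix}(r_i)=H_i/(\Gamma_W\cap\mathrm{Stab}(H_i))$. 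Because $H_i$ is a coordinate hyperplane, restriction to $H_i$ carries $\mathrm{Stab}_{\Or^+(J,\Z)}(H_i)$ into the integral orthogonal group of the sub-form $j_3$, and the level-two congruence condition descends; hence each cross section is an integral congruence two hyperbolic $3$-manifold, tiled by copies of the octahedral slice $H_i\cap(\text{24-cell})$, a single regular ideal octahedron.

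With the identification in hand, I would enumerate. The integral congruence two hyperbolic $3$-manifolds form a finite family catalogued in \cite{RT}, \cite{RT0}, and for each of the $22$ manifolds $W$ the explicit generators in \cite[page 110]{RT} determine $\Gamma_W\cap\mathrm{Stab}(H_i)$ and hence the induced side-pairings on the octahedral slice. Reading these off gives each cross section as a concrete gluing of one or more ideal octahedra, whose volume is $(\#\,\text{octahedra})\cdot v_{\mathrm{oct}}$; along the way I would also record the number of cusps and the first homology. Since each $r_i$ reverses orientation on the orientable manifold $W$, the fixed set $\mathrm{Fix}(r_i)$ need not be orientable, so I would next decide orientability case by case from the side-pairings and discard the non-orientable slices.

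It remains to match the surviving orientable octahedral $3$-manifolds with links in $S^3$. Using volume (equivalently, octahedron count), cusp number, and $H_1$ as coarse invariants to separate the candidates, I would then make a rigorous comparison of ideal triangulations, either by recognizing directly the standard octahedral decompositions of the Borromean rings $6^3_2$, of $8^3_9$, and of $8^4_2$, or by certifying the isometries with SnapPy, to conclude that the complete list of orientable cross sections is exactly these three complements.

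I expect the last step to be the main obstacle. Passing from an abstract gluing of ideal octahedra to a named link complement in $S^3$ is not automatic, and when two candidate cross sections share the same volume the coarse invariants alone will not separate them; one genuinely needs the finer invariants together with a verified combinatorial isometry, rather than a numerical coincidence, to be certain the three links exhaust the orientable cross sections.
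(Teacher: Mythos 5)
Your proposal follows essentially the same route as the paper: realize each cross-section as an integral congruence two hyperbolic $3$-manifold, then classify the orientable members of that finite family and match them with link complements. The paper disposes of both steps by citation --- the 107 possible cross-sections fall into 13 isometry classes by \cite[Theorem 5]{RT}, exactly 3 of which are orientable and are identified in \cite[pages 108--109]{RT} with the complements of $6^3_2$, $8^3_9$, and $8^4_2$ --- so the enumeration and the link-recognition step you flag as the main obstacle are exactly the content of the cited results in \cite{RT}, not work that needs to be redone.
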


\begin{proof} Each of the 3-dimensional cross sections must be isometric to one of the 107 possibilities encoded in \cite[Page 115]{RT}. However, these 107 are classified into 13 equivalence classes corresponding to isometry classes of the corresponding 3-manifolds \cite[Theorem 5]{RT}, out of which only 3 are orientable 3-manifolds. The 3 orientable possibilities are described in \cite[pages 108-109]{RT} and are the complement in $S^3$ of the link $6^3_2$, the link $8^3_9$ or the link $8^4_2$.  \end{proof}

\begin{figure}
 \includegraphics[width=.9\linewidth]{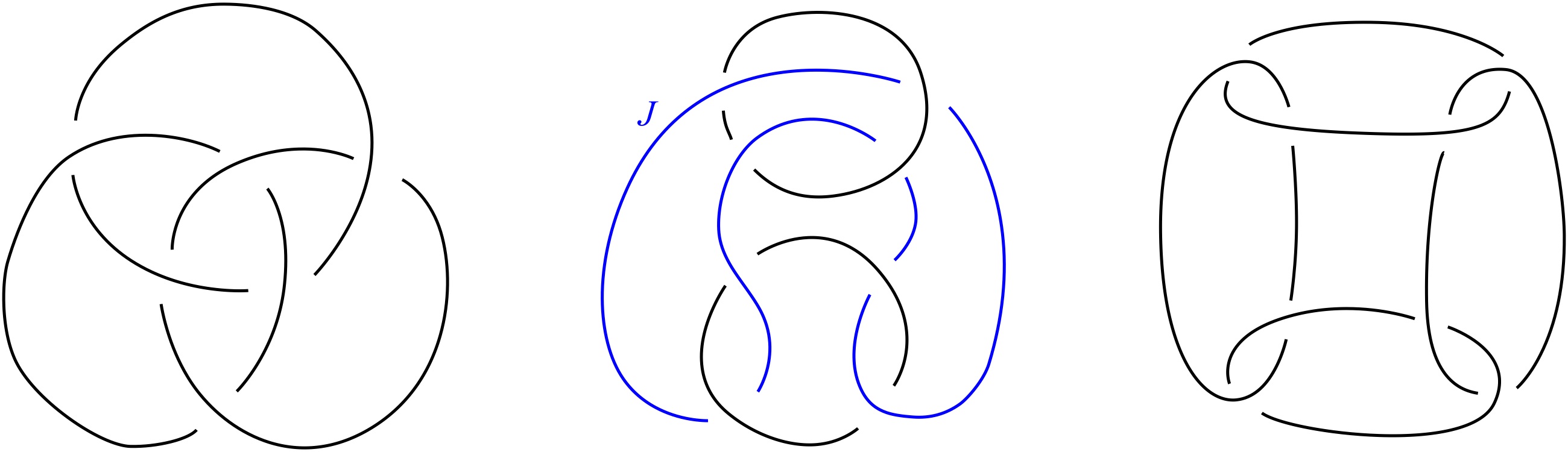}
 \caption{The links $6^3_2$, $8^3_9$, and $8^4_2$}
 \label{fig: links}
\end{figure}

\subsection{More about the links \texorpdfstring{$6^3_2$, $8^3_9$}{632, 839}, and \texorpdfstring{$8^4_2$}{842}}
Let $L$ denote one of the links $6^3_2$, $8^3_9$ or $8^4_2$.  Note that the complements of these links share the same $3$-dimensional hyperbolic volume which is approximately $7.3277247\ldots$.

\begin{lemma}
\label{small}
$S^3\setminus L$ does not contain a closed embedded totally geodesic 
surface.\end{lemma}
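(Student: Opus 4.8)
The plan is to reduce \cref{small} to the theorem of Menasco and Reid \cite{MeR} that the complement of a prime alternating link in $S^3$ contains no closed embedded totally geodesic surface. Thus the first step is to record that each of the three links $6^3_2$, $8^3_9$ and $8^4_2$ is prime, non-split and alternating. Primeness and non-splitness are automatic: each complement is hyperbolic (of volume twice that of the regular ideal octahedron), and a split or composite link complement is reducible or toroidal, hence not hyperbolic. It then remains only to observe that the diagrams drawn in \cref{fig: links} are alternating diagrams. Granting the Menasco--Reid theorem, \cref{small} follows at once for all three links simultaneously.

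Since the entire argument rests on \cite{MeR}, I would include the short conceptual reason their theorem applies, so the reduction reads cleanly. Suppose $S \subset S^3\setminus L$ were a closed embedded totally geodesic surface. Being totally geodesic, $S$ is $\pi_1$-injective and hence incompressible. Moreover $S$ is \emph{meridionally incompressible}: a meridional compression would yield an essential annulus joining $S$ to a cusp cross-section, whose core is an essential simple closed curve on $S$ that is freely homotopic into the cusp and therefore represents a parabolic element of $\pi_1(S^3\setminus L)$. But $S$ closed and totally geodesic forces every essential simple closed curve on $S$ to be freely homotopic to a closed geodesic of $S$, which is a closed geodesic of the ambient manifold and hence a loxodromic element; a loxodromic element is never parabolic, a contradiction. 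Thus $S$ carries no accidental parabolics.

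Menasco's analysis of incompressible surfaces in alternating link complements, as used in \cite{MeR}, shows that every closed incompressible surface in the complement of a prime alternating link is meridionally \emph{compressible}, i.e. must contain an accidental parabolic. This directly contradicts the previous paragraph, so no such $S$ can exist, proving \cref{small}.

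The main obstacle I anticipate is not the geometric input, which is standard, but the bookkeeping needed to verify that the hypotheses of \cite{MeR} genuinely hold for all three links: one must confirm that the diagrams in \cref{fig: links} are alternating and that the links are prime and non-split (the latter being handled uniformly by hyperbolicity). Once this is checked, \cref{small} is a direct application of the Menasco--Reid theorem, requiring no further computation.
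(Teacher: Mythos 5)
Your reduction has the right mechanism in place: a closed embedded totally geodesic surface is incompressible and carries no accidental parabolics, while Menasco's analysis shows that closed incompressible surfaces in prime alternating link complements are meridionally compressible, and this is exactly how \cite{MeR} is proved. The fatal problem is the step you set aside as ``bookkeeping'': the hypothesis that all three links are alternating is \emph{false}. Only the Borromean rings $6^3_2$ are alternating, so your argument is valid for that link alone (the paper instead invokes Lozano \cite{Loz}, which gives the stronger statement that the complement is small). The link $8^4_2$ is the Montesinos link $K(\frac{1}{2},\frac{1}{2},\frac{1}{2},-\frac{1}{2})$, i.e.\ the pretzel $P(2,2,2,-2)$; normalizing the tangle fractions into $(0,1)$ writes it as $M(e;\frac{1}{2},\frac{1}{2},\frac{1}{2},\frac{1}{2})$ with integer part $e=-1$, and since $-4<e<0$ it admits no Montesinos presentation in which all tangles have the same sign. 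By Lickorish--Thistlethwaite's work on minimal diagrams of Montesinos links, such a link has no alternating diagram at all, in any projection. The link $8^3_9$ is likewise one of the non-alternating eight-crossing links of the standard tables. So for two of the three links the key hypothesis of \cite{MeR} simply does not hold, and no amount of checking the figure can repair this; hyperbolicity gives you primeness and non-splitness, but it cannot give you alternation.

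This is precisely why the paper's proof of \cref{small} splits into three genuinely different arguments. For $8^4_2$ it uses Oertel's structure theorem for star/Montesinos links \cite{Oe}: any closed embedded essential surface arises by tubing the $4$-punctured spheres separating tangles, hence carries an accidental parabolic and cannot be totally geodesic. For $8^3_9$ it uses Dehn filling: trivial filling of the component $J$ gives a split link, $\pm1$ fillings give the Whitehead link complement, which is $2$-bridge and so contains no closed essential surface \cite{HT}; a totally geodesic surface (having no accidental parabolics) would then compress in two fillings along slopes at distance $2$, contradicting Wu's theorem \cite{Wu} (following \cite{CGLS}). Note that your observation that totally geodesic surfaces have no accidental parabolics is still an essential ingredient in both of these arguments, so that part of your proposal survives; what cannot survive is the blanket appeal to \cite{MeR}.
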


\begin{proof}  It is shown in \cite{Loz} that the complement of the Borromean rings is small (i.e. it does not contain any closed embedded essential surface), and in particular does not contain a closed embedded totally geodesic surface.

The link $8^4_2$ is the Montesinos link $K(\frac{1}{2},\frac{1}{2},\frac{1}{2},-\frac{1}{2})$ (in the notation of \cite{Oe}) and so \cite[Theorem 1]{Oe} implies that any closed embedded essential surface in the complement of $8^4_2$
arises from tubing the obvious $4$-punctured spheres separating pairs of tangles. In particular such a surface carries an accidental parabolic, and so cannot be totally geodesic. Indeed, in this case, it can be shown that in fact $8^4_2$ is small, as
these tubed surfaces compress.

Now consider the case of $L=8^3_9$, and suppose that $S^3\setminus L$ contains a closed embedded totally geodesic surface $S$. 
Trivial filling on the component $J$ in \cref{fig: links} provides a split link, and hence $S$ compresses in this filling.  In addition, $\pm 1$ fillings yields a manifold homeomorphic to the complement of the Whitehead link, which being a $2$-bridge link, does not contain any closed embedded essential surface \cite{HT}. Thus $S$ compresses in both $\pm 1$ filling.  Now the surface $S$ is totally geodesic (so does not carry an accidental parabolic element), and so an application of \cite[Theorem 1]{Wu} (following \cite{CGLS}) provides a contradiction since the slopes $\pm 1$ have distance $2$.\end{proof}

\section{Co-dimension one closed totally geodesic submanifolds in cusped arithmetic hyperbolic 4-manifolds}
\label{immersedin4}

In dimension $3$, any cusped arithmetic hyperbolic $3$-manifold contains infinitely many immersed closed totally geodesic surfaces (see \cite[Chapter 9]{MR}).  In this section, we show that the situation in dimension $4$ is
similar, providing a contrast with \cref{highdim} in dimensions $\geq 5$.

\subsection{Immersed closed totally geodesic hyperbolic 3-manifolds in integral congruence two hyperbolic 4-manifolds} 
We first show that the integral congruence two hyperbolic $4$-manifolds of \cite{RT} all contain many immersed closed totally geodesic hyperbolic $3$-manifolds (indeed any manifold in the commensurability class of these integral congruence two hyperbolic $4$-manifolds). To that end, let $p \equiv -1~\pmod 8$ be a prime, and $q_p$ the quadratic form (over $\Q$)
given by $x_1^2+x_2^2+x_3^2-px_4^2$. The congruence condition on $p$ implies that this form is anisotropic, and so as in \cref{cusp}, the group $\SO(q_p,\Z)$ determines a co-compact arithmetic lattice in $\SO^+(3,1)$.

\begin{prop}
\label{immersedRT}
With $p$ as above, and given any integral congruence two hyperbolic $4$-manifold $N$, there is a finite index subgroup $\Lambda_N< \SO(q_p,\Z)$ so that  $\mathbb{H}^3/\Lambda_N \hookrightarrow  \mathbb{H}^4/N$ is an immersed 
closed totally geodesic
hyperbolic 3-manifold. In particular any such $N$ contains infinitely many commensurability classes of immersed closed totally geodesic hyperbolic $3$-manifolds. \end{prop}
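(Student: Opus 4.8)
The plan is to produce the required closed totally geodesic $\mathbb{H}^3$ as the geodesic hypersurface cut out by a spacelike rational vector, using the dictionary from \cref{cusp}. Recall that in the hyperboloid model for $J=\langle 1,1,1,1,-1\rangle$ the codimension-one totally geodesic subspaces of $\mathbb{H}^4$ are exactly the sets $v^{\perp}\cap\mathbb{H}^4$ with $v\in\R^5$, $J(v)>0$, and that when $v$ can be taken rational the stabiliser of $v^{\perp}$ in a group commensurable with $\Or^+(J,\Z)$ is an arithmetic lattice in $\SO^+(J|_{v^{\perp}})\cong\SO^+(3,1)$ governed by the restricted form $J|_{v^{\perp}}$. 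So first I would reduce the proposition to the purely arithmetic task of exhibiting $v\in\Q^5$ with $J(v)>0$ such that $J|_{v^{\perp}}$ is (i) anisotropic over $\Q$, so the hypersurface is closed, and (ii) similar over $\Q$ to $q_p$, so the lattice is commensurable with $\SO(q_p,\Z)$.

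The main step is to locate such a $v$. Since $\langle p\rangle\cdot q_p=\langle p,p,p,-p^2\rangle\cong\langle p,p,p,-1\rangle$ over $\Q$, it suffices to realise $g:=\langle p,p,p,-1\rangle$ as $J|_{v^{\perp}}$; equivalently, to show $J\cong\langle p\rangle\perp g$ over $\Q$, in which case the basis vector spanning the $\langle p\rangle$ summand is the desired $v$ (with $J(v)=p$). I would check this equivalence with the Hasse--Minkowski classification: both $J$ and $\langle p,p,p,p,-1\rangle$ have rank $5$, signature $(4,1)$, and determinant $\equiv-1$ modulo squares, and a one-line Hilbert-symbol count shows their Hasse--Witt invariants are trivial at every place (for $\langle p,p,p,p,-1\rangle$ each Hilbert symbol occurs an even number of times). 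Hence the forms are $\Q$-equivalent and $v$ exists. Anisotropy in (i) is then automatic: $q_p$ is anisotropic (as already noted, $p\equiv-1\pmod 8$ makes $-p$ a square in $\Q_2$, so $q_p\cong\langle 1,1,1,1\rangle$ is the anisotropic Hamilton norm form over $\Q_2$), and $g$, being similar to $q_p$, is anisotropic as well.

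Next I would descend to the prescribed cover $N$. The hyperplane $v^{\perp}$ projects to an immersed totally geodesic $3$-orbifold in $\mathbb{H}^4/\Or^+(J,\Z)$ with group $S:=\mathrm{Stab}(v^{\perp})\cap\Or^+(J,\Z)$, a lattice commensurable (after the conjugation $T\in\GL(4,\Q)$ realising $g\sim q_p$) with $\SO(q_p,\Z)$. Viewing $N$ as a finite-index torsion-free subgroup of $\Or^+(J,\Z)$, I would set $\Lambda_N:=N\cap S\cap\SO(q_p,\Z)$ (identifying groups via $T$); this is of finite index in $\SO(q_p,\Z)$ and is contained in $N$. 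The totally geodesic map $\mathbb{H}^3\to\mathbb{H}^4\to\mathbb{H}^4/N$ then factors through $\mathbb{H}^3/\Lambda_N$, giving a closed (by anisotropy) immersed totally geodesic hyperbolic $3$-manifold $\mathbb{H}^3/\Lambda_N\hookrightarrow\mathbb{H}^4/N$, as required.

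Finally, for the ``infinitely many commensurability classes'' assertion I would let $p$ range over the primes $\equiv-1\pmod 8$, of which there are infinitely many by Dirichlet. Two cocompact lattices $\SO(q_p,\Z)$ and $\SO(q_{p'},\Z)$ in $\SO^+(3,1)$ are commensurable iff $q_p$ and $q_{p'}$ are $\Q$-similar; as these are rank-$4$ forms their determinants $-p,-p'$ are similarity invariants modulo squares, and $pp'$ is never a square for distinct primes, so the classes are distinct, and running the construction for each $p$ yields infinitely many classes inside the single fixed $N$. I expect the genuine work to be the form computation of the second paragraph together with the conjugation bookkeeping of the third; the Hasse--Witt verification is the crux of existence but is clean precisely because all relevant local invariants vanish, so the real subtlety is only in arranging that the submanifold descends to the prescribed cover $N$ with its group realised as a finite-index subgroup of $\SO(q_p,\Z)$.
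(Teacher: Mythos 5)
Your proposal is correct, and it shares the paper's overall architecture: both arguments reduce \cref{immersedRT} to exhibiting, inside $J$, a rational subform of signature $(3,1)$ that is $\Q$-similar to $q_p$, and then invoke the standard dictionary between rational hyperplanes, their stabilizers in the integral group, and commensurable arithmetic lattices (for which the paper simply cites \cite{ALR}) to produce $\Lambda_N$ and descend to the given cover $N$. Where you genuinely diverge is in how the form equivalence is verified, and that verification is the entire computational content of the paper's proof. The paper shows $q_p \perp \langle p \rangle \cong J$ over $\Q$ by the explicit matrix identity $A_pDA_p^t=D_p$ (with $p=8k-1$), i.e. $\langle 1,-1\rangle\cong\langle p,-p\rangle$ realized by a concrete integral matrix; this explicitness is exploited again later in the paper, where the remark in \cref{proof} reuses exactly this equivalence to conjugate $\Gamma(49)<\Or^+(J,\Z)$ into $\Gamma(7)<\Or^+(J_7,\Z)$. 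You instead prove $J\cong\langle p,p,p,p,-1\rangle$ abstractly via the Hasse--Minkowski classification, matching rank, signature, and determinant modulo squares, and noting that every local Hasse--Witt invariant of $\langle p,p,p,p,-1\rangle$ is trivial because each Hilbert symbol occurs an even number of times; you then pass to $q_p$ by the similarity $p\cdot\langle p,p,p,-1\rangle\cong\langle 1,1,1,-p\rangle = q_p$. Both verifications are valid; yours is more systematic and generalizes mechanically to other forms, while the paper's yields an explicit change-of-basis matrix. You also make explicit two points the paper leaves implicit: the anisotropy of $q_p$ (checked at $\Q_2$ using $-p\equiv 1 \bmod 8$), and the pairwise incommensurability of $\SO(q_p,\Z)$ for distinct primes $p$, via the fact that the determinant modulo squares is a similarity invariant of forms of even rank.
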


\begin{proof} The proof of the first claim will follow using standard arguments on equivalences of quadratic forms over $\Q$ yielding commensurable arithmetic lattices. In particular, we need to show that the quadratic form $q_p\perp <p>$ is equivalent to
the form $J$ of \cref{RTbackground} over $\Q$ (see \cite[\S 5,6]{ALR} for example). In this case, the equivalence can be seen directly as follows.

Let $p=8k-1$, and let
\[
A_p=\begin{pmatrix} 4k &4k-1\\ 4k-1 & 4k\end{pmatrix}, D = \begin{pmatrix} 1 & 0\\ 0 & -1\end{pmatrix}~\hbox{and}~D_p =  \begin{pmatrix} p & 0\\ 0 & -p\end{pmatrix} 
\]
A simple calculation shows that $A_pDA_p^t = D_p$, and from this the required equivalence can be deduced.

The second part follows from the fact that there are infinitely many primes $\equiv -1~\pmod 8$, and as noted above, all these quadratic forms being anisotropic over $\Q$ provide closed hyperbolic $3$-manifolds.\end{proof}

\subsection{Immersed closed totally geodesic hyperbolic $3$-manifolds in arithmetic hyperbolic $4$-manifolds} 
In this section we prove that the conclusion of \cref{immersedRT} holds much more broadly for cusped arithmetic hyperbolic $4$-manifolds.

\begin{theorem}
\label{allcusped}
Let $W$ be a cusped arithmetic hyperbolic $4$-manifold. Then $W$ contains infinitely many commensurability classes of immersed closed totally geodesic hyperbolic 3-manifolds.\end{theorem}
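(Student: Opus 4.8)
The plan is to reduce to the simplest-type construction of \cref{cusp} and then to generalize the explicit computation in \cref{immersedRT} from the single form $J$ to an arbitrary cusped form, producing an infinite family of anisotropic codimension-one subforms. First I would invoke the fact that a cusped, hence non-cocompact, arithmetic hyperbolic $4$-manifold is necessarily of simplest type: $W=\H^4/\Gamma$ with $\Gamma$ commensurable with $\SO^+(f,\Z)$ for a nondegenerate quadratic form $f$ over $\Q$ of signature $(4,1)$, exactly as used in the proof of \cref{highdim}. Non-cocompactness forces the field of definition to be $\Q$ and, by Godement's criterion, forces $f$ to be \emph{isotropic} over $\Q$. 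Consequently $f$ is isotropic over every completion $\Q_v$, hence universal, so by Hasse--Minkowski it represents every element of $\Q^*$; in particular it represents every positive rational by a spacelike vector. This universality is the feature I will exploit.

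The heart of the argument is to produce, for infinitely many primes $p$, a rational spacelike vector $v$ with $f(v)=p$ whose orthogonal complement carries an \emph{anisotropic} form. Writing $g_p:=f|_{v^\perp}$, universality gives an orthogonal splitting $f\cong\langle p\rangle\perp g_p$ with $g_p$ of signature $(3,1)$; by Witt cancellation $g_p$ depends, up to isometry, only on the square class of $p$, and $\det g_p=\det f/p$ in $\Q^*/\Q^{*2}$. I would then arrange that $g_p$ be anisotropic over $\Q$ by forcing it to be anisotropic over a single auxiliary completion $\Q_\ell$: recall a rank-$4$ form over $\Q_\ell$ is anisotropic precisely when its determinant is a square in $\Q_\ell$ and its Hasse invariant is the nontrivial one (equivalently, it is the norm form of the division quaternion algebra over $\Q_\ell$). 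Both conditions translate into congruence conditions on $p$, and by Dirichlet's theorem infinitely many primes $p$ satisfy them. This is precisely the mechanism of \cref{immersedRT}, where $f=J$, the represented value is $p$ with $p\equiv-1\pmod 8$, and $g_p=\langle 1,1,1,-p\rangle$ is anisotropic over $\Q_2$.

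Granting such a $v$, the setwise stabilizer of $V:=v^\perp$ in $\Gamma$ is commensurable with $\SO^+(g_p,\Z)$, a lattice in $\SO^+(3,1)=\mathrm{Isom}^+(\H^3)$ that is \emph{cocompact} because $g_p$ is anisotropic over $\Q$; since $\Gamma$ is torsion-free, the induced totally geodesic map $\H^4\cap V_{\R}\,/\,\mathrm{Stab}_\Gamma(V)\to W$ is an immersed closed totally geodesic hyperbolic $3$-manifold. To see that infinitely many primes $p$ yield infinitely many commensurability classes, I would use that such lattices are commensurable only when the corresponding forms are similar over $\Q$, together with the fact that for a rank-$4$ form the determinant is a similarity invariant modulo squares (scaling by $\lambda$ multiplies it by $\lambda^4$). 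Since $\det g_p=\det f/p$, distinct primes $p$ give distinct square classes, hence distinct imaginary quadratic ``invariant trace fields'' $\Q(\sqrt{\det f/p})$, and therefore pairwise incommensurable $3$-manifolds.

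The main obstacle is the anisotropy step: producing, for infinitely many $p$, congruence conditions that genuinely force $g_p$ to be anisotropic. The determinant $\det g_p=\det f/p$ has odd valuation at $p$ itself (for $p\nmid\det f$), so $g_p$ is automatically isotropic at $p$, and anisotropy must be certified at another prime $\ell$ through a careful analysis of the local invariants of $f$ at $\ell$ and at $2$. Ensuring that the prescribed local conditions at $\ell$ are consistent with Hilbert reciprocity and are realizable by an infinite congruence class of primes $p$ is the delicate point, and it is here that the freedom provided by the cusped (isotropic) hypothesis on $f$ is essential.
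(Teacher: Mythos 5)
Your high-level plan is in fact the same skeleton as the paper's proof: realize $W$ via a form $f$ over $\Q$ of signature $(4,1)$, split off represented values $f\cong\langle t\rangle\perp g_t$, certify that the rank-$4$ complement $g_t$ is anisotropic at a single finite place, and distinguish commensurability classes by the square class of $\det g_t$. But the step you yourself flag as ``delicate'' is a genuine gap, and in the precise form you propose it cannot be closed. Write $d=\det f$. From $c_\ell(f)=c_\ell(g_p)\,(p,\det g_p)_\ell$ and $\det g_p=pd$ one gets $c_\ell(g_p)=c_\ell(f)\,(p,-d)_\ell$; and if $pd\in(\Q_\ell^*)^2$ (which anisotropy of $g_p$ at $\ell$ forces, by \cref{lem Se}(f)), then $(p,-d)_\ell=1$. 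So your scheme needs a finite prime $\ell$ satisfying both (A) $c_\ell(f)=-(-1,-1)_\ell$ and (B) $pd\in(\Q_\ell^*)^2$; and for primes $p\neq\ell$, (B) forces $v_\ell(d)$ to be even. No congruence condition on $p$, and no appeal to Dirichlet, can rescue (B) at a prime where $v_\ell(d)$ is odd.

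Such an $\ell$ need not exist. Take $f=\langle 1,1,1,3,-1\rangle$: it has signature $(4,1)$ and is isotropic over $\Q$ (e.g.\ $(1,0,0,1,2)$), so it defines a cusped arithmetic class; its invariants are $d\equiv -3$, $c_2(f)=(3,-1)_2=-1$, $c_3(f)=(3,-1)_3=-1$, and $c_q(f)=1$ for $q\neq 2,3$. Condition (A) holds only at $\ell=3$ (at $\ell=2$ it would require $c_2(f)=+1$), but $v_3(d)=1$ is odd, so (B) fails there. Unwinding: for every prime $p$ --- indeed for every square-free $t$ coprime to $3$ --- the complement $g_t$ is isotropic at every place (its determinant $-3t$ has odd valuation at $3$ and at each prime dividing $t$; at the remaining places either the determinant is a nonsquare or the Hasse invariant has the wrong sign), hence isotropic over $\Q$ by Hasse--Minkowski. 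For this $W$ your construction therefore produces only cusped, never closed, totally geodesic $3$-manifolds. Anisotropic codimension-one subforms of this $f$ do exist, as the theorem demands, but they are complements of values divisible by $3$ (for instance $t=3q$ with $q\equiv 2 \bmod 3$); equivalently, one must first rescale $f$ by $3$. Arranging such choices uniformly in $f$ is exactly where the paper does its real work: it normalizes $f$ to the canonical representatives \cref{eq: MA forms} via \cite{Mon}, so that the ramified primes are precisely those dividing $S$, and the case analysis of \cref{lem case 1,lem case 2} then represents values adapted to those primes (such as $-S$ and $S+m^2$), certifying anisotropy at $2$ or at a prime dividing $S$. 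That arithmetic input --- not the universality of $f$, which is automatic --- is what your sketch is missing.
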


We begin with some preliminaries on non-degenerate diagonal quaternary quadratic forms 
\[
f_p=a_1x_1^2+a_2x_2^2+a_3x_3^2+a_4x_4^2
\]
over the local field $\Q_p$ for $p$ a prime or $p=-1$, with the understanding that $\Q_{-1}=\R$. 

Let $c_p(f)$ denote the Hasse-Minkowski invariant which is defined as
\begin{equation}
c_p(f) = \prod_{i<j}(a_i,a_j)_p
\end{equation}
where $(\cdot,\cdot)_p$ denotes the Hilbert symbol.  This invariant depends only on the equivalence class of $f$ and not on the choice of orthogonal basis. 

We collect some useful statements about Hilbert symbols and quadratic forms over local fields taken from \cite{Se}. Throughout $\legendre{u}{p}=(u,p)_p$ denotes the Legendre symbol, which as in \cite[II.3.3]{Se}, is extended to be defined for 
$u\in \Z_p^*$.

\begin{lemma}\label{lem Se} 
\begin{enumerate}[(a)]
\item If $p\neq 2$, the image of the integer $x=p^n u$ is a square in $\Q_p^*$ if and only if $n$ is even and $\legendre{u}{p}=1$ \cite[\S II.3.3, Theorem 3, page 17]{Se}.
\item The image of the integer $x=2^n u$ is a square in $\Q_2^*$ if and only if $n$ is even and $u\equiv 1\mod8$ \cite[II.3.3, Theorem 4, page 18]{Se}.
\item The Hilbert symbol satisfies the formulas \cite[III.1.1, Proposition 2, page 19]{Se}
	\begin{enumerate}[(i)]
		\item $(a,b)_p=(b,a)_p$
		\item$(a,b^2)_p=1$
		\item $(a,-a)_p=1$.
		\item $(-1,-1)_p=\begin{cases} -1 & \text{if } p=-1,2 \\
		1 & \text{if } p \text{ is odd.} \end{cases}$
	\end{enumerate}
\item If $a=p^\alpha u$ and $b=p^\beta v$ then 
	\[
	(a,b)_p = \begin{cases}
		(-1)^{\alpha\beta\epsilon(p)} \legendre{u}{p}^\beta \legendre{v}{p}^\alpha & \text{if } p\neq 2 \\
		(-1)^{\epsilon(u)\epsilon(v) + \alpha\omega(v) +\beta\omega(u)} & \text{if } p=2
		\end{cases}
	\]
	where $\epsilon(u)$ denotes the class modulo 2 of $\frac{u-1}{2}$ and $\omega(u)$ denotes the class modulo 2 of $\frac{u^2-1}{8}$ \cite[III.1.2, Theorem 1, page 20]{Se}.
\item By Dirichlet's Theorem, if $a$ and $m$ are relatively prime positive integers, there exists infinitely many primes $q$ such that $q\equiv a\mod m$ see \cite[III.2.2, Lemma 3 page 25]{Se}.
\item A quadratic form $f_p$ over $\Q_p$ is anisotropic if and only if its determinant $d(f)$ is in $(\Q_p^*)^2$ and $c_p(f_p)=-(-1,-1)_p$ \cite[IV.2.2, Theorem 6, page 36]{Se}.
\item By the Hasse principle, a quadratic form $f$ over $\Q$ is anisotropic if there is some prime $p$ for which the local form $f_p$ over $\Q_p$ is anisotropic \cite[IV.3, Theorem 8, page 41]{Se}.
\end{enumerate}
\end{lemma}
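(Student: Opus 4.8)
The plan is straightforward: the statement is a compilation of standard facts about local fields, Hilbert symbols, and quadratic forms, each proved in the cited section of \cite{Se}, so the proof will consist of quoting the relevant result for each item while indicating its conceptual source. For items (a) and (b), the criteria for an integer to be a square in $\Q_p^*$, I would note that both follow from Hensel's lemma applied to $X^2-u$: for odd $p$ a unit $u\in\Z_p^*$ is a square precisely when its reduction modulo $p$ is a square in the residue field, which is exactly the condition $\legendre{u}{p}=1$, while the parity of the valuation handles the power of $p$; for $p=2$ the failure of the naive Hensel estimate at the prime $2$ forces the stronger congruence $u\equiv1\pmod 8$. These are Theorems 3 and 4 of \cite[II.3.3]{Se}.

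For item (c) I would read the elementary symmetries of $(a,b)_p$ directly off its definition as the obstruction to solving $ax^2+by^2=z^2$ nontrivially: symmetry is immediate, $(a,b^2)_p=1$ and $(a,-a)_p=1$ come from the visible solutions $(x,y,z)=(0,1,b)$ and $(1,1,0)$, and the value of $(-1,-1)_p$ is a short case check in $\R$, in $\Q_2$, and in odd $\Q_p$. Item (d), the explicit formula for $(a,b)_p$ in terms of the valuations and unit parts, is the one genuinely computational input and the step I would expect to carry the real content; in \cite[III.1.2]{Se} it is obtained by using the bimultiplicativity of the symbol to reduce to finitely many base cases, which are then evaluated via the square criteria (a) and (b) together with a counting argument over the residue field. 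Here I would simply invoke \cite[III.1.2, Theorem 1]{Se}.

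The remaining items I would cite wholesale. Item (e) is Dirichlet's theorem on primes in arithmetic progressions, \cite[III.2.2]{Se}. Item (f), the anisotropy criterion for a quaternary form over $\Q_p$, is the local classification of quadratic forms by rank, determinant, and Hasse invariant: a nondegenerate rank-four form over $\Q_p$ is anisotropic exactly when it coincides with the unique such form, which is pinned down by $d(f)$ being a square and $c_p(f)=-(-1,-1)_p$, as in \cite[IV.2.2, Theorem 6]{Se}. Finally, for item (g) I would use only the trivial direction of the Hasse-Minkowski principle: a form isotropic over $\Q$ is automatically isotropic over every completion $\Q_p$, so anisotropy at a single place forces anisotropy over $\Q$, and the deep converse is not needed. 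In this way every part reduces to a citation, and the only real task is to match each assertion correctly to its source in \cite{Se}.
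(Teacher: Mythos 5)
Your proposal is correct and matches the paper's treatment: the paper gives no proof of this lemma beyond attaching the citations to \cite{Se} item by item, which is exactly what you do, and your brief sketches (Hensel's lemma for (a)--(b), explicit solutions for (c), the local classification for (f)) accurately reflect how Serre proves each fact. Your observation that item (g) needs only the trivial direction of Hasse--Minkowski --- isotropy over $\Q$ implies isotropy over every $\Q_p$ --- is a correct and worthwhile refinement of the citation as stated.
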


\begin{proof}[Proof of \autoref{allcusped}]
Let $W=\mathbb{H}^4/\Lambda$ so that $\Lambda$ is an arithmetic group commensurable to a group $\SO(f,\Z)$ for some non-degenerate quadratic form $f$ defined over $\Q$ of signature $(4,1)$. According to \cite[Theorems 6, 8]{Mon}, the commensurability class 
of $\SO(f,\Z)$ is uniquely determined by the projective equivalence class of $f$, which in turn, is itself determined by an invariant $S$ which is a product of $s$ many distinct odd primes, and another invariant $e_{-1}(F)$ (which we will not define here).
In our case because every $f$ has signature $(4,1)$, this invariant is always $2$ and so can be ignored. Hence the projective equivalence class of $f$ is completely determined by $S$.

Using \cite[Theorem 10]{Mon} (actually Claim 2 of the proof of Theorem 10), we may take $f$ to be the diagonal form (with basis $\{e_0,e_1,e_2,e_3,e_4\}$):
\begin{equation} \label{eq: MA forms}
 f = \begin{cases}
 \langle -1,1,1,aS,a \rangle & \text{if } S\equiv 1\mod 4 \\
 \langle 1,1,1,aS,-a\rangle & \text{if } S\equiv -1\mod 4
 \end{cases}
\end{equation} 
where $a$ is an odd prime such that $a\nmid S$, $a\equiv(-1)^s\mod4$ if $S\equiv 1\mod 4$ or $a\equiv(-1)^{s+1}\mod4$ if $S\equiv -1\mod 4$, and $\legendre{-a}{p}=-1$ for all $p\mid S$. The proof will be completed as a consequence of \cref{lem case 1,lem case 2} stated and proved below.
\end{proof}

\begin{lemma}\label{lem case 1}
Suppose $f = \langle -1,1,1,aS,a \rangle$ with $S\equiv 1\mod 4$ and $a\nmid S$, $a\equiv(-1)^s\mod4$ with $\legendre{-a}{p}=-1$ for all $p\mid S$ as in \cref{eq: MA forms}. Then $f$ contains infinitely many 
projectively inequivalent anisotropic quadratic subforms over $\Q$ of signature (3,1).
\end{lemma}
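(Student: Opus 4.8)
The plan is to produce the required subforms as orthogonal complements of positive vectors. If $v$ is a vector with $f(v)=c>0$, then $f\cong\langle c\rangle\perp(v^\perp)$, so $g:=v^\perp$ is automatically a rank-$4$ subform of signature $(3,1)$, and by Witt's decomposition its determinant satisfies $d(g)\equiv -Sc\pmod{(\Q^*)^2}$ (using $d(f)=-a^2S\equiv -S$). Since $f$ is indefinite of rank $5$, Meyer's theorem makes it isotropic, hence universal over $\Q$, so every $c>0$ is represented and every such $g$ genuinely occurs. The task thus reduces to choosing, for infinitely many primes $q$, a value $c=c_q$ so that $g_q=v^\perp$ is anisotropic and so that the classes $d(g_q)\in\Q^*/(\Q^*)^2$ are pairwise distinct.

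To detect anisotropy I would work at a single prime and invoke the Hasse principle, \cref{lem Se}(h): it suffices that $g_q$ be anisotropic over some $\Q_\ell$, which by \cref{lem Se}(g) means $d(g_q)\in(\Q_\ell^*)^2$ and $c_\ell(g_q)=-(-1,-1)_\ell$. I would fix the witnessing prime to be a prime $p\mid S$. Using \cref{lem Se}(c),(d) one computes $c_p(f)=(-1,S)_p(-1,a)_p(S,a)_p=\legendre{-1}{p}\legendre{a}{p}=\legendre{-a}{p}$, which by hypothesis equals $-1=-(-1,-1)_p$. The orthogonal-sum identity $c_p(f)=c_p(g_q)\,(d(g_q),c_q)_p$ then shows that as soon as $d(g_q)$ is a square in $\Q_p^*$ one has $(d(g_q),c_q)_p=1$, whence $c_p(g_q)=c_p(f)=-(-1,-1)_p$; so the Hasse-invariant half of the criterion comes for free.

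The delicate point is the square condition $d(g_q)\equiv -Sc_q\in(\Q_p^*)^2$. Since $p\mid S$ to first order, $-Sq$ has odd valuation at $p$ and is never a square there; this forces the choice $c_q=pq$ rather than $c_q=q$, so that $v_p(-Sc_q)=2$ is even and $-Sc_q\equiv -(S/p)q\pmod{(\Q^*)^2}$ has unit part at $p$. By \cref{lem Se}(a) this is a square in $\Q_p^*$ precisely when $\legendre{q}{p}=\legendre{-S/p}{p}$, a single congruence modulo $p$. Dirichlet's theorem, \cref{lem Se}(e), then supplies infinitely many primes $q$ satisfying it, and for each the form $g_q$ is anisotropic over $\Q_p$, hence over $\Q$. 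For these $q$ one has $d(g_q)\equiv -(S/p)q\pmod{(\Q^*)^2}$, and for distinct primes $q_1\neq q_2$ these classes differ by $q_1q_2$, a non-square; since the determinant modulo squares of a rank-$4$ form is unchanged under scaling and is therefore a projective invariant, the $g_q$ fall into infinitely many projective equivalence classes.

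I expect the main obstacle to be exactly the simultaneous bookkeeping of the local data at the witnessing prime: arranging $d(g_q)$ to be a local square while keeping the Hasse invariant on the nose (which the identity above reduces to the single computation $c_p(f)=\legendre{-a}{p}$ supplied by the hypothesis), together with checking that the resulting congruence on $q$ is consistent so that \cref{lem Se}(e) applies. One further point needing attention is the degenerate case $S=1$, where no prime divides $S$; here the hypothesis forces $a\equiv1\bmod4$, and a parallel computation gives $c_2(f)=1=-(-1,-1)_2$, so one instead witnesses anisotropy at $p=2$ (taking $q\equiv 7\bmod 8$ via \cref{lem Se}(b)), and the rest of the argument goes through verbatim.
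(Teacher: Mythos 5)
Your proof is correct, but it takes a genuinely different route from the paper's. The paper argues by explicit construction: it splits into the two parities of $s$, writes down explicit vectors in the quadratic space (e.g.\ $u=(\frac{q+1}{2})e_0+(\frac{q-1}{2})e_1$ with $f(u)=-q$), and obtains explicit diagonal subforms $\langle -q,1,aS,a\rangle$ (for $s$ even, anisotropy witnessed over $\Q_2$ using primes $q\equiv -S\bmod 8$) and $\langle -S,S+m^2,aS,a\rangle$ (for $s$ odd, witnessed over $\Q_p$ for a prime $p\mid S$ with $p\equiv 1\bmod 4$), then computes the determinant and Hasse invariant of each quaternary subform from scratch. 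You instead invoke Meyer's theorem to get universality of $f$, realize the subforms abstractly as orthogonal complements $v^\perp$ with $f(v)=c>0$, and---this is the key simplification---use the orthogonal-sum identity $c_p(f)=c_p(g)\,(c,d(g))_p$, so that once $d(g)$ is a local square the Hasse-invariant half of the anisotropy criterion reduces to the single computation $c_p(f)=\legendre{-a}{p}=-1$, which is exactly where the hypothesis on $a$ enters; your computation of $c_p(f)$ and your choice $c=pq$ with the congruence on $q$ both check out. This makes your argument uniform in the parity of $s$ (any prime $p\mid S$ serves as witness), at the cost of needing the degenerate case $S=1$ separately (which you correctly flag and handle at the prime $2$) and of being less explicit: the paper exhibits actual orthogonal bases of lattice vectors for its subforms, in keeping with its constructive style, whereas yours shows existence via representability. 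Both proofs get projective inequivalence the same way, from the determinant modulo squares, which as you note is scaling-invariant in even rank. One small correction: in the paper's numbering the anisotropy criterion is \cref{lem Se}(f) and the Hasse principle is \cref{lem Se}(g); there is no item (h).
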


\begin{proof}
Suppose first that $s$ is even, so from the description of $a$ given above, $a\equiv 1\mod4$. By Lemma \ref{lem Se}(e) there exists infinitely many odd primes $q$ such that $q\equiv -S\mod8$.
Let $u=(\frac{q+1}{2})e_0+(\frac{q-1}{2})e_1$ so $f(u)=-q$. Then the diagonal form $f'=\langle -q,1,aS,a \rangle$ is a subform of $f$ with orthogonal basis $\{u,e_2,e_3,e_4\}$.

Over the local field $\Q_2$, the determinant $d(f')=-qa^2S$ is in $(\Q_2^*)^2$ since $q$ was chosen so that $-qS\equiv1\mod8$ (see \cref{lem Se}(b)). Using \cref{lem Se}(c), $c_2(f')$ simplifies to $(-q,S)_2(a,-S)_2$. However, since $S\equiv a\equiv 1\mod4$, it follows from \cref{lem Se}(d) that $c_2(f')=1=-(-1,-1)_2$. Therefore $f'$ is anisotropic over $\Q_2$ and thus also over $\Q$. Since different choices of $q$ yield projectively inequivalent forms, we get infinitely many $\Q$-inequivalent anisotropic quadratic subforms of signature (3,1).

Now suppose that $s$ is odd, so $a\equiv -1\mod4$. Since $s$ is odd, and $S\equiv 1\mod 4$ we can find a prime $p|S$ with $p\equiv 1\mod 4$. Let $u=(\frac{S+1}{2})e_0+(\frac{S-1}{2})e_1$ so $f(u)=-S$. Let $v=(\frac{S-1}{2})e_0+(\frac{S+1}{2})e_1 + m e_2$ where $m$ is any positive integer such that $p$ does not divide $m$. Then $f(v)=S+m^2=pS'+m^2$, and by \cref{lem Se}(a), $f(v)\in(\Q_p^*)^2$.
Therefore the diagonal form $f'=\langle -S,S+m^2,aS,a \rangle$ is a subform of $f$ with orthogonal basis $\{u,v,e_3,e_4\}$.
Over the local field $\Q_p$, the determinant $d(f')=-a^2S^2(S+m^2)$ is in $(\Q_p^*)^2$  since $\legendre{-1}{p}=1$ for $p\equiv 1\mod 4$ (see \cref{lem Se}(a)). As in the previous case, using \cref{lem Se}(c), $c_p(f')$ simplifies to
$$c_p(f') = (-S,aS)_p(-S,a)_p(aS,a)_p = (-S,a)_p(-S,a)_p(-S,a)_p = (-S,a)_p.$$
Now since $p|S$, $p\equiv 1\mod 4$ and $\legendre{-a}{p}=-1$, \cref{lem Se}(d) implies that $(-S,a)_p=-1=-(-1,-1)_p$. Therefore $f'$ is anisotropic over $\Q_p$ and thus also over $\Q$. Since different choices of $m$ yield projectively inequivalent forms, the conclusion follows as before.
\end{proof}


\begin{lemma}\label{lem case 2}
Suppose $f = \langle 1,1,1,aS,-a \rangle$ with $S\equiv -1\mod 4$ and $a\nmid S$, $a\equiv(-1)^{s+1}\mod4$ with and $\legendre{-a}{p}=-1$ for all $p\mid S$ as in \cref{eq: MA forms}. Then $f$ contains infinitely many 
projectively inequivalent anisotropic quadratic subforms over $\Q$ of signature (3,1).
\end{lemma}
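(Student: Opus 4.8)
The plan is to mirror the proof of \cref{lem case 1}, exhibiting the required subforms as anisotropic quaternary forms detected over a single nonarchimedean place; and in fact no case split on the parity of $s$ seems necessary. Since $S\equiv -1\equiv 3\pmod 4$, an odd number of the prime factors of $S$ are congruent to $3\pmod 4$, so I may fix a prime $p\mid S$ with $p\equiv 3\pmod 4$ and write $S=pS'$ with $p\nmid S'$. As $a$ is an odd prime with $a\nmid S$ we have $p\neq a$, and because $p\equiv 3\pmod 4$ gives $\legendre{-1}{p}=-1$, the hypothesis $\legendre{-a}{p}=-1$ forces $\legendre{a}{p}=1$; thus $a$ is a square in $\Q_p$. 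The entire argument will run over $\Q_p$, so $a\bmod 4$ (equivalently $s\bmod 2$) never intervenes. The aim is to produce infinitely many signature $(3,1)$ subforms $f'$ of $f$ that are anisotropic over $\Q_p$: anisotropy over $\Q$ then follows from \cref{lem Se}(g), and distinct square classes of $d(f')$ give projective inequivalence.

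For the construction I would retain the vectors $e_3,e_4$ (of norms $aS$ and $-a$) and replace the positive-definite block $\langle 1,1,1\rangle$ on $e_0,e_1,e_2$ by a rank-two sublattice. Since $\langle 1,1,1\rangle$ is isotropic, hence universal, over $\Q_p$, I would first select a primitive integral $u\in\langle e_0,e_1,e_2\rangle$ with $v_p(f(u))=1$ (solving $x^2+y^2+z^2\equiv 0\pmod p$ and invoking the three-square theorem) and arrange the unit $w:=f(u)/p$ to satisfy $\legendre{w}{p}=\legendre{S'}{p}$. Letting $v$ range over the rank-two lattice $P:=u^{\perp}\cap\langle e_0,e_1,e_2\rangle$ with $p\nmid f(v)$ then yields the candidate subforms $f'=\langle f(u),f(v),aS,-a\rangle$, on the orthogonal basis $\{u,v,e_3,e_4\}$, each of signature $(3,1)$.

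The verification rests on the criterion \cref{lem Se}(f): I must check $d(f')\in(\Q_p^*)^2$ and $c_p(f')=-(-1,-1)_p=-1$. Now $d(f')=-a^2S\,f(u)f(v)$ has even $p$-valuation, and a Hilbert-symbol computation via \cref{lem Se}(d), using $\legendre{a}{p}=1$, collapses both conditions to statements about $\legendre{w}{p}$, $\legendre{f(v)}{p}$ and $\legendre{S'}{p}$. The structural point is that $\det(f|_P)$ has odd $p$-valuation, so $f|_P$ reduces mod $p$ to a degenerate binary form; comparing Hasse invariants in the splitting $\langle 1,1,1\rangle\cong\langle f(u)\rangle\perp(f|_P\otimes\Q_p)$ forces the unit values of $f|_P$ to be nonresidues, i.e. $\legendre{f(v)}{p}=-1$ automatically. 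This is exactly the determinant condition, and the remaining Hasse condition reduces to $\legendre{w}{p}=\legendre{S'}{p}$, which the choice of $u$ already guarantees. Finally, as $v$ varies in $P$ its norm realizes infinitely many square classes, hence so does $d(f')$, giving infinitely many projectively inequivalent anisotropic subforms.

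I expect the $\Q_p$-analysis to be the main obstacle. In \cref{lem case 1} the hyperbolic plane $\langle -1,1\rangle$ represents $-S$ through a single explicit vector and the parameter $m$ manufactures infinitely many forms cheaply; here the positive-definite ternary block has no hyperbolic summand, so one is forced to work modulo $p$ and to track square classes. The delicate part is making the determinant and Hasse conditions hold simultaneously — in particular, proving that the degenerate complement $f|_P$ represents only nonresidues while $f(u)/p$ can be steered into the prescribed class by $\Q_p$-universality of $\langle 1,1,1\rangle$. Once these square-class bookkeeping issues are settled, the count of inequivalent forms is routine.
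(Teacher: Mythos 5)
Your proposal is correct, and it takes a genuinely different route from the paper's proof. The paper splits on the parity of $s$: for $s$ even it works essentially at the prime $2$, choosing $\alpha,\beta$ so that $m=aS(\alpha^2S-\beta^2)\equiv -1\pmod 8$ inside the span of $e_3,e_4$, whence the subform $\langle 1,1,1,-m\rangle$ is anisotropic because $m$ is not a sum of three squares; for $s$ odd it fixes the same prime $p\mid S$, $p\equiv 3\pmod 4$, that you use, but then invokes Dirichlet's theorem to get primes $q\equiv 1\pmod 4$ with $\legendre{q}{p}=-1$, writes $q=\alpha^2+\beta^2$, and needs a separate representation claim (that $\langle 1,q,-p^2\rangle$ represents $S$, proved via Hasse--Minkowski) before applying the criterion of \cref{lem Se}(f) at $p$. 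You eliminate the case split entirely through the observation that $\legendre{-a}{p}=-1$ together with $\legendre{-1}{p}=-1$ forces $\legendre{a}{p}=1$, so that $a\bmod 4$ is invisible over $\Q_p$; you then build the subform inside the positive-definite block, and the structural point that unit values of $f$ on $u^{\perp}$ are automatically nonresidues mod $p$ (by comparing Hasse invariants in $\langle 1,1,1\rangle\cong\langle pw\rangle\perp f|_{u^\perp}$, which is exactly where $p\equiv 3\pmod 4$ enters) makes the determinant condition come for free once the choice of $u$ secures the Hasse condition. I checked your bookkeeping: with $\legendre{a}{p}=1$ one gets $c_p(f')=-\legendre{w}{p}\legendre{S'}{p}$ (the symbol $\legendre{f(v)}{p}$ appears squared) and $d(f')\in(\Q_p^*)^2$ if and only if $\legendre{w}{p}\legendre{S'}{p}\legendre{f(v)}{p}=-1$, exactly as you claim. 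Your route buys uniformity and economy (no Dirichlet, no two-squares decomposition, no auxiliary ternary representation claim); its costs are the integral existence of $u$, which your CRT-plus-three-squares sketch handles, and the closing assertion that $f(v)$ meets infinitely many rational square classes as $v$ ranges over $P$. That assertion is true but deserves a line of proof --- for instance, for infinitely many primes $q$ prime to $p\,d(f|_P)$ the binary form $f|_P$ takes values of $q$-valuation exactly one with $p\nmid f(v)$, so the determinants $-a^2Sf(u)f(v)$ occupy infinitely many square classes --- though in fairness the paper's own ``different choices of $q$ yield projectively inequivalent forms'' is equally terse.
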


\begin{proof}
Suppose first that $s$ is even, so $a\equiv -1\mod4$. Pick $\alpha>\beta\geq0$ such that $aS(\alpha^2 S-\beta^2)\equiv-1\mod8$. Such $\alpha,\beta$ always exist.
To see this, if $a\equiv-1\mod8$ let $\alpha=1$ and $\beta=0$; if $a\equiv3\mod8$ and $S\equiv-1\mod8$ let $\alpha=2$ and $\beta=0$; and if $a\equiv3\mod8$ and $S\equiv3\mod8$ let $\alpha=3$ and $\beta=2$.
Note also that $aS(\alpha^2 S-\beta^2)>0$.

Set $u=\beta e_3+\alpha S e_4$ so that $f(u)=-aS(\alpha^2 S-\beta^2)<0$. Set $m=-f(u) >0$.
Then the diagonal form $f'=\langle 1,1,1,-m \rangle$ is a subform of $f$ with orthogonal basis $\{u,e_2,e_3,e_4\}$.
Since $m=-f(u)\equiv-1\mod 8$, it is not the sum of three squares and so $f'$ is anisotropic.
Since any other choice of $\alpha,\beta$ congruent to the particular $\alpha,\beta$ given as examples would still work, and different choices yield infinitely many projectively inequivalent forms, the conclusion follows as before.

Now suppose that $s$ is odd, so $a\equiv 1\mod4$. Fix a prime $p|S$ with $p\equiv -1\mod 4$. By Lemma \ref{lem Se}(e), there exists infinitely many primes $q\equiv 1\mod 4$ such that $\legendre{q}{p}=-1$.
Since $q\equiv 1\mod 4$ it can be written as a sum of two squares $q=\alpha^2+\beta^2$. Let $w_1=\alpha e_1+\beta e_2$ so $f(w_1)=q$.
Consider the diagonal quadratic form $g=\langle 1,q,-p^2\rangle$. \\[\baselineskip]
\noindent{\bf Claim 1:}~{\em $g$ represents $S$ over $\Q$.}\\[\baselineskip]
Assuming the claim for now, there exists an integer solution $x^2+qy^2-p^2z^2=Sm^2$. Let $w_2=xe_0+\beta y e_1-\alpha y e_2$ so $f(w_2)=Sm^2+p^2z^2$ and $w_2$ pairs trivially with $w_1$. Therefore, the diagonal form $f'=\langle q,Sm^2+p^2z^2,aS,-a\rangle$ is a subform of $f$ with orthogonal basis $\{w_1,w_2,e_3,e_4\}$.
Let $S'=S/p$. Then $f(w_2)=Sm^2+p^2z^2=p(S'm^2+pz^2)$, and since $S'm^2+pz^2\equiv S'm^2 \mod p$, $f(w_2)\equiv pS'\mod (\Q_p^*)^2$.
Considering $f'$ over $\Q_p$, since $d(f')=-qa^2m^2S(S+p^2z^2)$ and since $\legendre{q}{p}=-1$ implies $-q\in (\Q_p^*)^2$ (see \cref{lem Se}(a)), we see that $d(f')=-qa^2S(S+p^2z^2)\equiv p^2S'(S'+pz^2)\in (\Q_p^*)^2$.
Using \cref{lem Se}(c) and (d), $c_p(f')$ simplifies to 
\[
c_p(f')=-(S'+pz^2,p)_p \cdot (-qS',p)_p \cdot (q,p)_p \cdot(-a,p)_p=-1=-(-1,-1)_p.
\]
Therefore $f'$ is anisotropic over $\Q_p$ and thus also over $\Q$ (see \cref{lem Se}(g)). As before, different choices of $q$ yield projectively inequivalent forms, and the conclusion follows as before.

We now prove Claim 1. Since the determinant of $g$ is $-q$ up to squares, by the Hasse principle it suffices to show that $g$ represents $S$ over $k=\R,\Q_2,\Q_q$. By \cite[IV.2.2, Corollary to Theorem 6]{Se} the ternary quadratic form $g$ represents $S$ if $S\neq -q$ in $k^*/(k^*)^2$ or $(-1,q)=c_p(g)$. As $S>0$ and $-q<0$, the first case holds over $\R$. As $q\nmid S$, the first case also holds over $\Q_q$. Over $\Q_2$ we have $c_2(g)=(q,-p^2)_2=(q,-1)_2=(-1,q)_2$ as required. Therefore $g$ represents $S$ over $\Q$ which proves the claim.
\end{proof}

\section{Proof of \cref{main_or}}\label{proof}

 We begin with a general lemma.

\begin{lemma}
\label{positiveb1}
Let $X$ be an orientable finite volume hyperbolic $4$-manifold with $\chi(X)=1$ and containing an embedded orientable totally geodesic hyperbolic $3$-manifold. Then $b_1(X)>0$. \end{lemma}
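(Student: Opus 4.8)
The plan is to establish a clean dichotomy: I will show that the totally geodesic hypersurface $M$ cannot separate $X$, and that a non-separating two-sided hypersurface immediately forces $b_1(X)>0$. First observe that since both $X$ and $M$ are orientable and $M$ has codimension one, the normal line bundle of $M$ has $w_1=0$ and is therefore trivial, so $M$ is two-sided. If $M$ is non-separating, then $X\setminus M$ is connected and there is a loop $\gamma$ meeting $M$ transversally in a single point. The algebraic intersection number yields a pairing $H_3(X;\Z)\times H_1(X;\Z)\to\Z$ (both cycles being compact) under which $[M]\cdot[\gamma]=\pm1$; hence $[\gamma]$ has infinite order in $H_1(X;\Z)$, so $b_1(X)\geq1$. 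It therefore suffices to rule out the separating case.

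Suppose then that $M$ separates $X$ into two pieces $X_1$ and $X_2$ glued along $M=X_1\cap X_2$. Each $X_i$ is a finite-volume hyperbolic $4$-manifold with totally geodesic boundary $M$ (together with possibly some of the cusps of $X$). The crucial point is that such a piece has \emph{strictly positive} Euler characteristic. I would see this by doubling $X_i$ across its totally geodesic boundary: the double $DX_i$ carries a complete finite-volume hyperbolic metric, so Gauss--Bonnet gives $\chi(DX_i)=\tfrac{3}{4\pi^2}\Vol(DX_i)>0$. Since $M$ is a (odd-dimensional) $3$-manifold we have $\chi(M)=0$, whence $\chi(DX_i)=2\chi(X_i)-\chi(M)=2\chi(X_i)$ and thus $\chi(X_i)>0$. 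As $X_i$ is homotopy equivalent to a finite complex, $\chi(X_i)$ is an integer, so $\chi(X_i)\geq1$. Inclusion--exclusion then gives $\chi(X)=\chi(X_1)+\chi(X_2)-\chi(M)=\chi(X_1)+\chi(X_2)\geq2$, contradicting $\chi(X)=1$. Therefore $M$ is non-separating, and by the previous paragraph $b_1(X)>0$.

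The step I expect to require the most care is the positivity and integrality of $\chi(X_i)$, i.e. the Gauss--Bonnet input. Routing it through the doubling construction is the cleanest option, since it avoids verifying directly that the boundary term in the Chern--Gauss--Bonnet formula vanishes along a totally geodesic boundary; one needs only the standard fact that doubling a hyperbolic $4$-manifold along a totally geodesic (possibly cusped) boundary yields a complete finite-volume hyperbolic $4$-manifold. A secondary point to handle cleanly is the possibility that $M$, or the cusps of $X$, interact non-trivially with the decomposition: the identity $\chi(X)=\chi(X_1)+\chi(X_2)-\chi(M)$ is purely topological and remains valid, and $\chi(M)=0$ whether $M$ is closed or cusped, so the argument is unaffected (in the cusped-$M$ case the intersection pairing in the first paragraph should be taken in Borel--Moore homology, but the reasoning is identical).
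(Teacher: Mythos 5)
Your proof is correct and takes essentially the same approach as the paper's: rule out the separating case because each complementary piece would have positive \emph{integral} Euler characteristic by Gauss--Bonnet, contradicting $\chi(X)=1$, and then deduce $b_1(X)>0$ from the non-separating hypersurface via duality. The paper compresses this into two sentences; your doubling construction and explicit intersection-pairing argument simply supply the details it leaves implicit.
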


\begin{proof} Let $N\hookrightarrow X$ be an embedded orientable totally geodesic hyperbolic $3$-manifold. Suppose that $N$ separates, then $X$ is decomposed into two finite volume hyperbolic 4-manifolds with geodesic boundary, whose
volumes are proportional to their (integral) Euler characteristic. However, $\chi(X)=1$, and this is a contradiction.  Duality now implies $b_1(X)>0$.\end{proof}
Note that the argument in the proof of \cref{positiveb1} also proves the following.
\begin{lemma}
\label{disjoint}
Let $X$ be an orientable finite volume hyperbolic $4$-manifold with $\chi(X)=1$ and which contains embedded orientable disjoint totally geodesic hyperbolic $3$-manifolds $N_1, N_2, \ldots , N_r$. Then $b_1(X)\geq r$.\end{lemma}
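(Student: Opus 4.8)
The plan is to mimic the separation argument of \cref{positiveb1} but applied to the disjoint collection $N_1,\dots,N_r$ all at once, extracting one unit of $H^1$ (equivalently $H_3$) per submanifold. First I would recall the key mechanism of the previous lemma: a single embedded orientable totally geodesic $N$ cannot separate $X$, because a separation would cut $X$ into two finite-volume pieces with geodesic boundary, each of strictly positive volume and hence strictly positive integral Euler characteristic; by additivity of $\chi$ over such a decomposition (the boundary being totally geodesic, so no correction terms beyond the usual gluing) the two positive integers would sum to $\chi(X)=1$, which is impossible. Thus each $N_i$ is non-separating.

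The heart of the argument is to upgrade ``each $N_i$ is individually non-separating'' to ``the $N_i$ are homologically independent.'' I would argue that no nonempty subcollection of the $N_i$ can separate $X$: if some subset $\{N_{i_1},\dots,N_{i_k}\}$ together separated $X$, then cutting along that union would again decompose $X$ into finite-volume hyperbolic pieces with totally geodesic boundary, each of positive integer Euler characteristic, forcing their sum (which equals $\chi(X)=1$) to be at least $2$ — a contradiction. Equivalently, viewing each $N_i$ as an integral $3$-cycle $[N_i]\in H_3(X;\Z)$ via its orientation, the statement that no subcollection separates is precisely the statement that the classes $[N_1],\dots,[N_r]$ are linearly independent over $\mathbb{Z}/2$ (a $\mathbb{Z}/2$-combination of the $N_i$ bounds exactly when that subcollection separates $X$). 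Hence $\dim_{\mathbb{Z}/2} H_3(X;\mathbb{Z}/2)\geq r$.

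Finally I would pass from $H_3$ to $b_1$ using Poincar\'e--Lefschetz duality for the finite-volume (hence homotopy equivalent to a compact manifold with boundary) orientable $4$-manifold $X$. Over $\mathbb{Z}/2$, or more carefully over $\Q$ using that the $[N_i]$ are also $\Q$-independent once non-separation is established integrally, duality identifies $H_3(X)$ with $H^1(X)$, so $b_1(X)=b_3(X)\geq r$. I expect the main obstacle to be the bookkeeping in the cutting argument when $k\geq 2$ of the $N_i$ are removed simultaneously: one must verify that cutting $X$ along a disjoint totally geodesic union still yields finitely many pieces each of positive Euler characteristic, and that Euler characteristics add correctly across the totally geodesic gluing loci, so that positivity of each piece genuinely obstructs the sum from equalling $1$. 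Given that the single-submanifold case is already established in \cref{positiveb1}, the cleanest route is to phrase everything homologically: non-separation of every subcollection is equivalent to $\mathbb{Z}/2$-independence of the fundamental classes, and the volume/Euler-characteristic obstruction supplies exactly that non-separation.
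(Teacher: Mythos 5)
Your first two steps are sound, and the first of them coincides with the paper's opening move: the volume/Euler characteristic argument shows that no nonempty subcollection of the $N_i$ can separate $X$, and from this the classes $[N_i]$ are indeed independent in $H_3(X;\mathbb{Z}/2)$ (a dependence $\sum_{i\in S}[N_i]=0$ forces every loop to meet $\bigcup_{i\in S}N_i$ evenly, whereas connectedness of the complement lets one close up a short arc through some $N_{i_0}$ into a loop meeting the union exactly once). The genuine gap is in your final step. The manifolds this lemma is applied to are cusped, so $X$ is homotopy equivalent to a compact manifold $\bar X$ with nonempty boundary, and Poincar\'e--Lefschetz duality identifies $H_3(\bar X;\Q)$ with the \emph{relative} group $H^1(\bar X,\partial\bar X;\Q)\cong H_1(\bar X,\partial\bar X;\Q)^*$, not with $H^1(\bar X;\Q)$. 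The identity ``$b_1(X)=b_3(X)$'' is false for manifolds with boundary: $\bar X=S^3\times[0,1]$ has $b_3=1$ but $b_1=0$. Independence of the $[N_i]$ in $H_3$ therefore only bounds the relative first Betti number, whose extra classes are carried by arcs running between boundary components rather than by loops, so no lower bound on $b_1(X)$ follows. Two further problems compound this: a mod $2$ bound on $H^1(X;\mathbb{Z}/2)=\mathrm{Hom}(H_1(X;\Z),\mathbb{Z}/2)$ can never bound $b_1$ because of possible $2$-torsion in $H_1(X;\Z)$; and the $\Q$-independence of the $[N_i]$ that you invoke as the careful alternative is never established, since your separation criterion controls only $\mathbb{Z}/2$-combinations (i.e.\ subcollections), not arbitrary integer combinations. (Relatedly, your ``exactly when'' is only true in one direction: in a noncompact manifold a separating hypersurface need not bound a compact chain, e.g.\ one cutting off a cusp end.)

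The repair is to feed non-separation into the conclusion directly rather than routing it through $H_3$ and duality. Since $X\setminus(N_1\cup\dots\cup N_r)$ is connected (your step 1 applied to the full collection), for each $i$ take a short arc crossing $N_i$ once and close it up inside the complement to obtain a loop $\gamma_i$ meeting $N_i$ transversally in one point and disjoint from $N_j$ for $j\neq i$. Each $N_j$ is a compact oriented hypersurface in the oriented $X$, so the integral intersection pairing of loops with the $N_j$ is well defined on $H_1(X;\Z)$, and the matrix $(\gamma_i\cdot N_j)$ is diagonal with entries $\pm1$, hence nonsingular over $\Q$; this forces $[\gamma_1],\dots,[\gamma_r]$ to be linearly independent in $H_1(X;\Q)$, so $b_1(X)\geq r$. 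This is exactly the abelianized form of the paper's own proof of \cref{disjoint}, which collapses the connected complement to a point to get a map $X\to\bigvee_r S^1$ that is surjective on $\pi_1$ (the loops $\gamma_i$ hit the generators), whence $\pi_1(X)$ surjects a free group of rank $r$.
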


\begin{proof} The proof of \cref{positiveb1} shows that none of the $N_i$ can separate $X$ and furthermore, also shows that $N_1\cup N_2 \cup \ldots \cup N_r$ cannot separate $X$.  Thus $X\setminus N_1\cup N_2 \cup \ldots \cup N_r$ is connected, and standard argument now shows that $\pi_1(X)$ surjects a free group of rank $r$. This proves the lemma.\end{proof}

Henceforth, throughout this section $W$ is as in the statement of \cref{main_or} and $M\hookrightarrow W$ is a closed embedded orientable totally geodesic hyperbolic $3$-manifold.

Referring to \cref{RTmanifolds}, since the manifolds labelled $16-22$ have first Betti number equal $0$, we can apply \cref{positiveb1} to rule out these possibilities for $W$.

To deal with the remaining $15$ possibilities for $W$, observe that from \cref{RTmanifolds}, each of these manifolds admits at least one orientable cross-section.

\begin{lemma}
\label{isdisjoint1}
$M$ is disjoint from all orientable cross-sections.
\end{lemma}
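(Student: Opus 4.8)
The plan is to argue by contradiction, exploiting the elementary fact that the intersection of two totally geodesic hypersurfaces in a hyperbolic manifold is again totally geodesic, and then to produce a closed embedded totally geodesic surface sitting inside a cross-section, which \cref{small} forbids.

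First I would suppose, for contradiction, that $M$ meets some orientable cross-section $C$. By \cref{Xsection}, $C$ is isometric to the complement in $S^3$ of one of the links $6^3_2$, $8^3_9$, or $8^4_2$; in particular $C$ is cusped, whereas $M$ is closed by hypothesis, so $M\neq C$. Both $M$ and $C$ are embedded totally geodesic hypersurfaces in $W$, the latter being a component of the fixed-point set of one of the reflections generating $K$. I would then invoke the standard dichotomy for totally geodesic hypersurfaces in a hyperbolic manifold: if they meet, then either they agree on a neighborhood of a common point, or they meet transversely. Since distinct hyperplanes of $\mathbb{H}^4$ that intersect do so along a codimension-two copy of $\mathbb{H}^2$, transverse intersection yields a surface. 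Because $M\neq C$, the first alternative is excluded (see the obstacle below), so at every point of $M\cap C$ the two tangent hyperplanes are distinct and $\Sigma:=M\cap C$ is an embedded totally geodesic surface in $W$.

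The crux is then the following contrast. On the one hand, $\Sigma\subset M$ and $M$ is closed, so $\Sigma$ is a compact, hence closed, totally geodesic surface. On the other hand, $\Sigma\subset C$, so $\Sigma$ is a closed embedded totally geodesic surface in the link complement $C$. As \cref{small} makes no orientability assumption, this is a direct contradiction. Hence $M\cap C=\varnothing$, and since $C$ was an arbitrary orientable cross-section, $M$ is disjoint from all of them.

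The step I expect to require the most care is justifying that the tangential (non-transverse) case forces $M=C$, and thus may be discarded. Here one uses that totally geodesic submanifolds in constant curvature are locally pieces of hyperplanes, so if $M$ and $C$ share a point together with their tangent hyperplane there, they coincide on an open set; analytic continuation (equivalently, connectedness of $M$ and $C$ together with the fact that a totally geodesic submanifold is determined by a single point and its tangent plane) then propagates the equality globally, contradicting $M\neq C$. Once this is in place, the remaining points -- compactness of $\Sigma$ from closedness of $M$, embeddedness of $\Sigma$ in $C$ inherited from the embeddings of $M$ and $C$ in $W$, and the final appeal to \cref{small} -- are all immediate.
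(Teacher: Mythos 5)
Your proof is correct and follows essentially the same route as the paper: intersect $M$ with the cross-section to produce a closed embedded totally geodesic surface in the link complement $S^3\setminus L$, contradicting \cref{small}. The paper simply asserts that the intersection is such a surface, whereas you carefully justify it via the tangency/transversality dichotomy (ruling out tangency since $M$ is closed and $C$ is cusped); this is a fair filling-in of details, not a different argument.
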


\begin{proof}  By \cref{Xsection}, these cross-section are all isometric to one of the complements of the links $L$ stated in \cref{Xsection}.  Suppose that $M$ meets one of the cross-sections, then 
$M$ must meet $S^3\setminus L$ in a closed
orientable embedded totally geodesic surface. However, this is impossible by \cref{small}.\end{proof}
%
Since $M$ is disjoint from any orientable cross section, and from \cref{RTbackground} $W$ is a regular cover of ${\H}^4/\Gamma(2)$, using the isometries of $W$ induced from the reflections in the
co-ordinate hyperplanes we get at least $2$ disjoint copies of $M$ embedded in $W$ both of which are disjoint from the orientable cross-section, which is itself non-separating in $W$ (by the proof of \cref{positiveb1}).
Thus we can conclude from \cref{disjoint} that $b_1(W)\geq 3$.  Referring to \cref{RTmanifolds} we see that this excludes all examples except the first example listed in \cref{RTmanifolds}.  However, in this case, there are
$3$ orientable cross sections, and $M$ is disjoint from all of these by \cref{isdisjoint1}. This gives $6$ disjoint embedded copies of $M$, and so by \cref{disjoint} we actually have $b_1(W)>3$. This contradiction completes the proof. \qed

\begin{rem} We note here an alternative approach to ruling out the first $15$ manifolds of \cref{RTmanifolds}.  Using the three (resp. two) orientable cross-sections in the manifold $1$ (resp. manifolds $5$ and $6$) the six (resp. four) disjoint copies of $M$ embedded in $W$ together with \cref{disjoint} rules out these manifolds. For the manifolds numbered $7-15$ of \cref{RTmanifolds}  where $b_1=1$, 
the two disjoint copies of $M$ embedded in $W$, can be used together with \cref{disjoint} to rule these out.  This brings us to the manifolds $2$, $3$ and $4$.  In this case, from \cite[Table 2]{RT0}, the orientable cross-section is homeomorphic 
to the complement of $6_2^3$, in the case of manifold number $2$, and to the complement of $8_9^3$ for manifolds $3$ and $4$.  In these cases, that the orientable cross-sections are non-separating can be seen directly by checking 
that one of the cusp tori (say $T$) of the cross-section meets a $3$-torus cusp cross-section $C$ of the $4$-manifold. The torus $T$ is an embedded non-separating torus in $C$ and so we can find a dual curve in $C$ that meets $T$ once. 
It follows that the cross-sections have to be non-separating.
\end{rem}

\begin{rem} Using the equivalence of the quadratic forms $J$ and $J_7=x_0^2+x_1^2+x_2^2+7x_3^2-7x_4^2$ given in the proof of \cref{immersedRT},  one can construct explicit manifolds commensurable with any of the hyperbolic $4$-manifolds
considered in the proof of  \cref{main_or} containing a closed embedded orientable totally geodesic hyperbolic $3$-manifold.  

For example, if $\Gamma(49)  < \Or^+(J,\Z)$ denotes principal congruence subgroup of level $49$, then it can be checked that the equivalence described above conjugates $\Gamma(49)$ into a subgroup of the
principal congruence subgroup $\Gamma(7) < \Or^+(J_7,\Z)$.  The subform $x_0^2+x_1^2+x_2^2-7x_4^2$ defines a cocompact subgroup of $\Or^+(J_7,\Z)$ acting on a hyperbolic $3$-space $H$. Using the reflection in $\mathbb{H}^4$ through
$H$ and arguing as
in \cite{Mil}, it can be shown that $\mathbb{H}^4/\Gamma(7)$ contains a closed embedded orientable totally geodesic hyperbolic $3$-manifold, and hence so does the quotient of $\mathbb{H}^4$ by $\Gamma(49)  < \Or^+(J,\Z)$. The Euler characterstic
of $\mathbb{H}^4/\Gamma(49)$ is enormous, exceeding $700,000$.\end{rem}

\section{Volume from tubular neighbourhoods}
\label{tubular}
To prove \cref{main_1011}, we will make use of embedded totally geodesic hyperbolic 3-manifolds in a different way, and, in particular, we will make use of a result of Basmajian \cite{Bas} which provides disjoint collars about closed embedded orientable totally geodesic hypersurfaces in hyperbolic manifolds.  We state this only the case of interest, namely for hyperbolic $4$-manifolds.

Following \cite{Bas}, let $r(x) = \log \coth(x/2)$, let $V(r)$ denote the volume of a ball of radius $r$ in ${\H}^3$.  It is noted in \cite{Bas} that, $V(r) = \omega_3 \int_0^r \sinh^2(r)dr$, where $\omega_3$ is the area of the unit sphere in ${\R}^3$ (i.e. $\omega_3=4\pi$).

In \cite[pages 213--214]{Bas}, the volume of a tubular neighbourhood of a closed embedded orientable totally geodesic hyperbolic $3$-manifold of $3$-dimensional hyperbolic volume $A$ in a hyperbolic $4$-manifold is given
in terms of the the  $4$-dimensional tubular neighbourhood function $c_4(A) = (\frac{1}{2})(V \circ r)^{-1}(A)$. Moreover,
as noted in \cite[Remark 2.1]{Bas}, when the totally geodesic submanifold separates, an improved estimate can be obtained using the tubular neighbourhood function $d_4(A) = (\frac{1}{2})(V \circ r)^{-1}(A/2)$ and we record this as follows.

 \begin{lemma}
\label{volume_in_hood} 
Let $X$ be an orientable finite volume hyperbolic $4$-manifold containing a closed embedded separating orientable totally geodesic hyperbolic $3$-manifold of $3$-dimensional hyperbolic volume $A$. 
Then $X$ contains a tubular neighbourhood of $M$ of volume
$$\mathcal{V}'(A) = 2A\int_0^{d_4(A)}\cosh^3(t) dt.$$\end{lemma}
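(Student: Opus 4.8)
The plan is to combine two essentially independent ingredients: the explicit geometry of a totally geodesic hypersurface inside $\mathbb{H}^4$ written in Fermi (normal) coordinates, which produces a clean volume form, and Basmajian's embedding result, which guarantees that the two-sided collar of the stated half-width is in fact embedded. Once the embedding is granted, the volume is a one-line integral; the substance of the lemma therefore lies in identifying the correct embedded width, and this is exactly what the functions $c_4$ and $d_4$ (and [Bas, Remark 2.1]) encode.

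First I would set up Fermi coordinates along $M$. Since $X$ and $M$ are orientable, the rank-one normal bundle of $M$ is orientable, hence trivial, so there is a globally defined unit normal field $n$; let $\nu(x,t)=\exp_x(t\,n(x))$ denote the normal exponential map on $M\times(-w,w)$. Because $M$ is totally geodesic and $X$ has constant curvature $-1$, the Jacobi equation $J''-J=0$ governing the equidistant hypersurfaces has, with initial data $J(0)=\mathrm{Id}$ and $J'(0)=S=0$ (the shape operator of a geodesic hypersurface vanishes), the solution $J(t)=\cosh(t)\,\mathrm{Id}$ on the three-dimensional tangent space of $M$. Consequently the pulled-back metric is
\[
\nu^*g = dt^2 + \cosh^2(t)\,g_M,
\]
and the Riemannian volume form is $\cosh^3(t)\,dt\wedge dV_M$. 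Granting that $\nu$ restricts to an embedding on $M\times(-d_4(A),d_4(A))$, the volume of the resulting tubular neighbourhood follows from Fubini's theorem together with $\mathrm{Vol}(M)=A$ and the evenness of $\cosh^3$:
\[
\int_M\int_{-d_4(A)}^{d_4(A)}\cosh^3(t)\,dt\,dV_M = A\int_{-d_4(A)}^{d_4(A)}\cosh^3(t)\,dt = 2A\int_0^{d_4(A)}\cosh^3(t)\,dt = \mathcal V'(A).
\]

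The remaining, genuinely substantive, point is the embedding claim, which I would invoke from Basmajian's tubular neighbourhood theorem [Bas, pages 213--214] and its sharpening [Bas, Remark 2.1] for separating hypersurfaces. The underlying mechanism is the orthogeodesic/shadow argument: if the collar of half-width $w$ failed to embed, one produces an orthogeodesic (a geodesic arc meeting $M$ perpendicularly at both ends) of length $\ell\le 2w$, and the shadow it casts on $M$ is a metric ball of radius $r(\ell)$ in $\mathbb{H}^3$, of volume $V(r(\ell))$. Minimality of the shortest orthogeodesic forces its shadow to inject into $M$, so $V(r(\ell))\le A$, whence $\ell\ge (V\circ r)^{-1}(A)=2c_4(A)$ and the collar of half-width $c_4(A)$ embeds. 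When $M$ separates $X$, the two shadows at the two feet of a minimal orthogeodesic lie on genuinely distinct sides and are disjoint subsets of $M$, improving the estimate to $2V(r(\ell))\le A$, hence $\ell\ge (V\circ r)^{-1}(A/2)=2d_4(A)$; taking $w=d_4(A)$ gives the embedded collar feeding into the computation above.

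I expect the only delicate conceptual step to be the separating improvement, namely justifying that for separating $M$ the two shadows at the ends of a minimal orthogeodesic are truly disjoint in $M$ rather than overlapping or coinciding, which is what turns $A$ into $A/2$ and enlarges $c_4(A)$ to $d_4(A)$. Since this is precisely the content of [Bas, Remark 2.1], in the present setting it suffices to cite it, and the only part requiring independent verification is the Fermi-coordinate volume computation displayed above, which is routine.
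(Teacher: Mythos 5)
Your proposal is correct and matches the paper's approach: the paper likewise treats this lemma as a record of Basmajian's results, citing [Bas, pages 213--214] for the tubular neighbourhood function and [Bas, Remark 2.1] for the improvement to $d_4(A)$ in the separating case, with the Fermi-coordinate volume integral being the implicit routine step. Your write-up simply makes explicit the $\cosh^3(t)$ computation and the shadow argument that the paper leaves to the citation.
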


Moreover, \cite{Bas} also proves that disjoint embedded closed orientable totally geodesic hyperbolic $3$-manifolds in an orientable finite volume hyperbolic $4$-manifold have disjoint collars, thereby contributing additional volume.  For our purposes we summarize what we need in the following.

\begin{corollary}
\label{more_vol}
Let $X$ be an orientable finite volume hyperbolic $4$-manifold of Euler characteristic $\chi$ containing $K$ disjoint copies of a closed embedded orientable totally geodesic hyperbolic $3$-manifold of $3$-dimensional hyperbolic volume $A$. 
Assume that all of these disjoint copies separate $X$. Then
$$\Vol(X) = (\frac{4\pi^2}{3})\chi \geq K\mathcal{V}'(A).$$
\end{corollary}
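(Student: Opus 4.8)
The plan is to split the statement into its two independent parts. The equality $\Vol(X) = \frac{4\pi^2}{3}\chi$ is nothing more than the Gauss--Bonnet--Chern theorem for finite volume hyperbolic $4$-manifolds, so all the real content sits in the inequality $\Vol(X) \geq K\mathcal{V}'(A)$. For this I would exhibit $K$ pairwise disjoint open subsets of $X$, one surrounding each copy of the totally geodesic $3$-manifold, each of volume exactly $\mathcal{V}'(A)$, and then bound $\Vol(X)$ from below by the sum of their volumes.

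In more detail, write $M_1,\dots,M_K$ for the disjoint copies. By hypothesis each $M_i$ separates $X$, so \cref{volume_in_hood} applies to each one individually and produces a tubular neighbourhood $N_i$ of $M_i$ of volume $\mathcal{V}'(A)$, using the improved separating width $d_4(A)$. Since the $M_i$ are pairwise disjoint embedded totally geodesic hypersurfaces, Basmajian's collar theorem \cite{Bas} guarantees that their collars may be taken pairwise disjoint. Granting that the $N_i$ are mutually disjoint, one gets $\Vol(X) \geq \sum_{i=1}^K \Vol(N_i) = K\mathcal{V}'(A)$, which combined with the volume formula yields the claim.

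The one point that genuinely needs care — and which I expect to be the main obstacle — is that the neighbourhoods $N_i$ realizing the \emph{improved} width $d_4(A) > c_4(A)$ remain mutually disjoint, rather than merely the standard collars of width $c_4(A)$. Basmajian's disjointness is most naturally phrased for the standard radius $c_4(A)$, whereas \cref{volume_in_hood} extracts the larger volume $\mathcal{V}'(A)$ precisely by invoking the separating hypothesis to widen the collar to $d_4(A)$. I would resolve this by running Basmajian's orthogeodesic argument for the entire family at once: because every $M_i$ separates $X$, any geodesic arc that meets two of the collar regions, or that returns to a single collar from the opposite side, must cross some $M_j$, and this is exactly the configuration the separating estimate excludes. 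Thus the same separation hypothesis that powers the single-surface improvement also underwrites the mutual disjointness of the wider collars, so the volume contributions simply add and the inequality follows.
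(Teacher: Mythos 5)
Your proposal follows the paper's own route exactly: the equality is Gauss--Bonnet, and the inequality comes from applying \cref{volume_in_hood} to each of the $K$ copies and invoking Basmajian's theorem that disjoint embedded closed totally geodesic hypersurfaces have disjoint collars, so the $K$ tubular neighbourhoods of volume $\mathcal{V}'(A)$ contribute disjointly to $\Vol(X)$. The subtlety you flag---that the \emph{widened} collars of radius $d_4(A)$ from the separating hypothesis, not just the standard ones of radius $c_4(A)$, must remain pairwise disjoint---is exactly the point the paper disposes of by citing \cite{Bas} (the disjointness statement together with Remark 2.1), so your orthogeodesic argument is a careful unpacking of that citation rather than a different proof.
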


\section{Proof of Theorem \ref{main_1011}}\label{proof1}

Let $W$ be as in the statement of \cref{main_1011}, and suppose that $M$ is a closed embedded totally geodesic hyperbolic $3$-manifold in $W$. Since $M\subset W\subset S^4$, $M$ is orientable and separates $W$. This will allow
us to use the formula for the volume of a tubular neighbourhood given in \cref{more_vol}.

Let $N$ be the non-orientable manifold $1011$ in the census of \cite{RT} with $W\rightarrow N$ the orientable double cover, and $L$ denote the link $8^3_9$. Note that again by the construction of $N$ in \cite{RT}, the manifold $N$ is a regular cover of $\mathbb{H}^4/\Gamma(2)$ with covering group $K$.

\begin{lemma}
\label{XsectionN}
In the case of the manifold $N$, each of the four cross sections is isometric to $S^3\setminus L$.\end{lemma}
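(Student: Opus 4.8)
The plan is to establish that all four cross sections of $N$ are isometric to $S^3\setminus L$ by combining the general classification of orientable cross sections from \cref{Xsection} with the specific structure of the manifold $1011$ recorded in \cite{RT}. Since $N$ is one of the integral congruence two hyperbolic $4$-manifolds, each of its four coordinate-hyperplane cross sections is, by the discussion in \cref{RTbackground}, a totally geodesic integral congruence two hyperbolic $3$-manifold, and \cref{Xsection} restricts any \emph{orientable} such cross section to one of the complements of $6^3_2$, $8^3_9$, or $8^4_2$. The first thing I would do is confirm that all four cross sections of $N$ are in fact orientable: this should follow from the fact that $N$ is covered by the orientable manifold $W$ together with the invariance of the face-pairings under $K$, or else be read off directly from the cross-section data for manifold $1011$ in \cite[Page 115]{RT}.

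The core of the argument is then a direct identification using the encoding in \cite{RT}. I would locate the row for the manifold numbered $1011$ in the relevant table of \cite{RT} (analogous to the orientable data summarized in \cref{RTmanifolds}) and extract the four symbols specifying its cross sections among the $107$ possibilities. Each such symbol determines, via \cite[Theorem 5]{RT} and the identification in \cite[pages 108--109]{RT}, a specific one of the $13$ isometry classes. The claim is simply that all four of these symbols correspond to the class of $S^3\setminus L = S^3\setminus 8^3_9$. This is a finite, explicit bookkeeping check: read off the four cross-section codes for $1011$ and verify each maps to $8^3_9$ under the classification of \cite[Theorem 5]{RT}.

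The main obstacle is not mathematical depth but rather the precise retrieval and correct interpretation of the tabulated data in \cite{RT}: one must be careful that the cross-section labels for the \emph{non-orientable} manifold $1011$ are being read correctly, since the tables in \cite{RT} encode cross sections for all $1171$ manifolds (including non-orientable ones), and the correspondence between a coordinate hyperplane and its cross-section type must be tracked consistently with the conventions of \cite{RT0}. A secondary point to handle with care is the consistency check that $8^3_9$ is indeed orientable and appears among the three orientable possibilities of \cref{Xsection}, so that there is no conflict with the general classification.

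Once the four codes are confirmed to all yield $8^3_9$, the lemma follows immediately. I would therefore structure the proof as: (i) invoke \cref{Xsection} to limit the possibilities; (ii) cite the tabulated cross-section data for $1011$ from \cite{RT}; and (iii) observe that in this particular manifold each of the four entries is the code for $S^3\setminus 8^3_9$, completing the identification. No delicate estimates or geometric constructions are needed beyond this explicit reading of the census.
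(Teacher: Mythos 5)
Your overall route---identify the cross-section codes of the census manifold $1011$ in \cite{RT} and match them against the classification of \cref{Xsection}---is the same strategy as the paper's, but as written your plan has two genuine gaps in execution. First, the ``finite, explicit bookkeeping check'' is not a direct table lookup. The manifold $1011$ has code \texttt{14FF28} in \cite[Table 3]{RT}, and its four cross sections carry the $k_1k_5k_9$ codes \texttt{714}, \texttt{274}, \texttt{172}, \texttt{147}. Only the last of these, \texttt{147}, actually appears in \cite[Table 1]{RT}, where it is identified with the manifold $M^3_2 \cong S^3\setminus 8^3_9$. The other three codes are absent from the table, so your step (iii) fails as stated: one must first show that \texttt{714}, \texttt{274}, and \texttt{172} are equivalent to \texttt{147} under symmetries of the polytope $Q^3$. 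This is the real content of the paper's proof---the symmetries of $Q^3$ are identified with symmetries of a cube, and the three stray codes are carried to \texttt{147} by explicit $\pi$-rotations (about axes through midpoints of opposite edges for \texttt{714} and \texttt{274}, and through centers of opposite faces for \texttt{172}).

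Second, your proposed shortcut for orientability---that the cross sections of $N$ are orientable because $N$ is covered by the orientable manifold $W$---does not work. An embedded hypersurface in an orientable manifold need not be orientable (a one-sided hypersurface is non-orientable; compare $\mathbb{RP}^2\subset\mathbb{RP}^3$), and in any case $N$ itself is non-orientable, so nothing about its cross sections can be deduced this way. The paper instead verifies orientability directly from the side-pairing data: each cross-section pairing has the form $r_ik_i$ with $r_i$ a reflection, hence is orientation preserving exactly when $k_i$ is orientation reversing, and one checks that the digits occurring ($k_i\in\{1,2,4,7\}$) correspond to the orientation-reversing diagonal matrices $\mathrm{diag}(-1,1,1,1)$, $\mathrm{diag}(1,-1,1,1)$, $\mathrm{diag}(1,1,-1,1)$, $\mathrm{diag}(-1,-1,-1,1)$. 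Your fallback suggestion of reading orientability off the census data is essentially this check, but it must be carried out at the level of side pairings, not by citing \cref{Xsection}, which only constrains cross sections already known to be orientable.
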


\begin{proof} 
In \cite[Table 3]{RT} the manifold $1011$ is given by the code \texttt{14FF28} which represents the side pairing \texttt{11114444FFFFFFFF22228888} for the 24 sides of the ideal 24-cell $Q^4$. In the notation of \cite{RT}, the four cross sections have $k_1k_5k_9$ codes \texttt{714}, \texttt{274}, \texttt{172}, \texttt{147}, which correspond to the side pairings $r_ik_i$ for the 12 sides of the polytope $Q^3$ where $r_i$ is the reflection on side $i$ and $k_1=k_2=k_3=k_4$, $k_5=k_6=k_7=k_8$, $k_9=k_{10}=k_{11}=k_{12}$. Since $r_i$ is a reflection, the side pairing $r_ik_i$ is orientation preserving if and only if the corresponding $k_i$ is orientation reversing. But this happens only if $k_i\in\{1,2,4,7\}$ since then it corresponds to the diagonal matrices with $1\leftrightarrow\mathrm{diag}(-1,1,1,1)$, $2\leftrightarrow\mathrm{diag}(1,-1,1,1)$, $4\leftrightarrow\mathrm{diag}(1,1,-1,1)$, $7\leftrightarrow\mathrm{diag}(-1,-1,-1,1)$. Therefore, all four cross sections of $N$ are 
 
 orientable.

In \cite[Table 1]{RT} we see that the code \texttt{147} corresponds to the integral congruence two $3$-manifold $M_2^3$ of \cite{RT} and which is isometric to the link complement $S^3\setminus L$ \cite[Page 108]{RT}. The other three codes \texttt{714}, \texttt{274}, \texttt{172} do not appear in \cite[Table 1]{RT}. However, as we briefly describe below, these are equivalent up to symmetries of $Q^3$ to \texttt{147}.

First, the symmetries of $Q^3$ are identified with the symmetries of the cube whose vertices are the actual vertices of $Q^3$ (see \cref{fig: Q3}). Using this identification, the codes \texttt{714} and \texttt{274} are equivalent to \texttt{147} via a rotation by $\pi$ along axes between the midpoints of opposite edges of the cube, and the code \texttt{172} is equivalent to \texttt{147} via a rotation by $\pi$ along an axis between the centers of two opposite faces.\end{proof}

\begin{figure}
 \includegraphics[width=.5\linewidth]{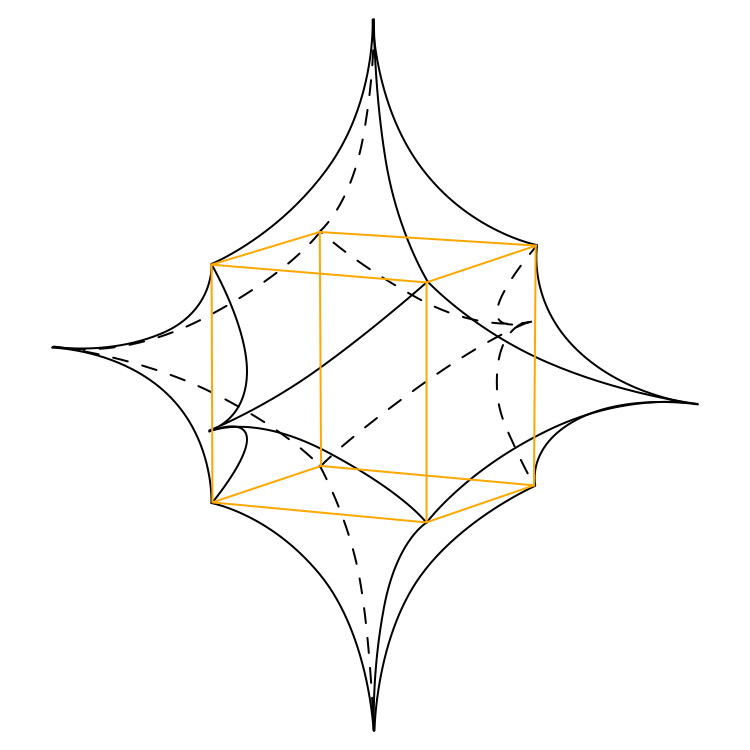}
 \caption{The polytope $Q^3$}
 \label{fig: Q3}
\end{figure}

\begin{lemma}
\label{isdisjoint}
\begin{enumerate}
\item $W$ is a regular cover of ${\H}^4/\Gamma(2)$.
\item The lift of any cross section of $N$ to $W$ consists of two embedded totally geodesic copies of $S^3\setminus L$.
\item $M$ is disjoint from all such lifts.
\end{enumerate}
\end{lemma}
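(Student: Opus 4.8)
The plan is to prove the three statements in turn, exploiting the description of $N$ as a regular $K$-cover of the orbifold $\mathbb{H}^4/\Gamma(2)$ recorded in \cref{RTbackground} together with the identification of its cross sections in \cref{XsectionN}. Write $N=\mathbb{H}^4/\Gamma_N$, so that $\Gamma_N\triangleleft\Gamma(2)$ with $\Gamma(2)/\Gamma_N\cong K$. Since $N$ is non-orientable, $\Gamma_N$ contains orientation-reversing isometries, so its orientable double cover is $W=\mathbb{H}^4/\Gamma_W$ with $\Gamma_W=\Gamma_N\cap\SO^+(4,1)$. For (1), I would observe that $\Gamma_W$ is the intersection of the two subgroups $\Gamma_N\triangleleft\Gamma(2)$ and $\Gamma(2)\cap\SO^+(4,1)$, the latter being normal in $\Gamma(2)$ because it has index two; hence $\Gamma_W\triangleleft\Gamma(2)$ and $W\to\mathbb{H}^4/\Gamma(2)$ is regular. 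This part is essentially formal.

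For (2), recall from \cref{XsectionN} that each cross section $C$ is the image of a coordinate hyperplane $H$, is isometric to $S^3\setminus L$, and in particular is orientable, with $\mathrm{Stab}_{\Gamma_N}(H)$ acting on $H\cong\mathbb{H}^3$ as its fundamental group. The number of components of the preimage of $C$ under $W\to N$ equals the number of $\Gamma_W$-orbits on the $\Gamma_N$-orbit of $H$; since $\Gamma_W\triangleleft\Gamma_N$ has index two, this is the index $[\Gamma_N:\Gamma_W\cdot\mathrm{Stab}_{\Gamma_N}(H)]$, which equals $2$ exactly when $\mathrm{Stab}_{\Gamma_N}(H)\subset\Gamma_W=\Gamma_N\cap\SO^+(4,1)$ (equivalently, when $C$ is two-sided in $N$) and equals $1$ otherwise. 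I would then verify the splitting condition directly from the side-pairing data in the proof of \cref{XsectionN}: the group $\mathrm{Stab}_{\Gamma_N}(H)$ is generated by the face-pairings $r_jk_j$ of the polytope $Q^3$, where $r_j$ is a reflection of $\mathbb{H}^4$ (determinant $-1$) and each relevant $k_j$ is one of the diagonal matrices $\mathrm{diag}(-1,1,1,1),\dots,\mathrm{diag}(-1,-1,-1,1)$ listed there, each of determinant $-1$ on $\mathbb{H}^4$. Hence every generator satisfies $\det(r_jk_j)=(-1)(-1)=+1$, so $\mathrm{Stab}_{\Gamma_N}(H)\subset\SO^+(4,1)$, the preimage splits into two components, and each maps with degree one onto $C$ and is therefore isometric to $S^3\setminus L$.

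For (3), I would argue exactly as in \cref{isdisjoint1}. Each lift is a totally geodesic copy $S$ of $S^3\setminus L$, and $M$ is a closed totally geodesic $3$-manifold. If $M$ met such a lift, then since $M$ is compact while $S$ is cusped they cannot share a $3$-dimensional piece (two complete totally geodesic submanifolds agreeing on an open set coincide), so $M\cap S$ is a transverse, hence $2$-dimensional, totally geodesic intersection; being a closed subset of the compact $M$ it is a closed embedded totally geodesic surface in $S\cong S^3\setminus L$. This contradicts \cref{small}, so $M$ is disjoint from every lift.

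The main obstacle is step (2): the assertion that the preimage has \emph{two} components (rather than being a connected double cover) is equivalent to two-sidedness of the orientable cross sections in $N$, and this is genuinely not automatic — a reflection locus of an orbifold can unwrap to a one-sided hypersurface in a manifold cover via glide-reflection monodromy. What rescues the statement here is precisely the determinant computation above, which reads off from the explicit matrices recorded in the proof of \cref{XsectionN} that $\mathrm{Stab}_{\Gamma_N}(H)$ lies in $\SO^+(4,1)$. Setting up the orbit/coset count correctly (using normality of $\Gamma_W$ in $\Gamma_N$) and carrying out this verification is where the real content of the lemma lies.
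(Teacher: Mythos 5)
Your parts (1) and (3) agree with the paper's proof. For (1) the paper notes that the orientation double cover is a characteristic cover of $N$, hence regular over ${\H}^4/\Gamma(2)$; your version (intersection of the two normal subgroups $\Gamma_N$ and $\Gamma(2)\cap\SO^+(4,1)$ of $\Gamma(2)$) is the same formal argument. For (3) the paper, like you, simply runs the argument of \cref{isdisjoint1} using \cref{small}.

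Part (2) is where the content lies, and there your proof has a genuine error. You are right that the paper's one-sentence justification (``being orientable, these must lift to two copies in the orientable double cover'') implicitly needs more than orientability: the preimage splits exactly when $\mathrm{Stab}_{\Gamma_N}(H)\subset\SO^+(4,1)$, i.e.\ when the cross section is two-sided, and an orientable fixed locus of a reflection can indeed be one-sided in a manifold cover (your glide-reflection remark; the core of a M\"obius band is the basic example). But your verification of this condition uses the wrong matrices. The diagonal matrices $\mathrm{diag}(-1,1,1,1),\dots,\mathrm{diag}(-1,-1,-1,1)$ quoted in the proof of \cref{XsectionN} are the \emph{three-dimensional} codes $k_i$: they are elements of $\Or(3,1)$ acting on the hyperplane $H\cong\mathbb{H}^3$, not the elements of $\Gamma_N$ stabilizing $H$. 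The stabilizing elements are the $4$-manifold side-pairings $k_Sr_S$, with $k_S$ read from the code \texttt{11114444FFFFFFFF22228888}, so $k_S\in\{\mathtt{1},\mathtt{2},\mathtt{4},\mathtt{8},\mathtt{F}\}$, and the $3$-dimensional code of a side of $Q^3$ is obtained from $k_S$ by deleting the coordinate that defines $H$. Since none of $\mathtt{7},\mathtt{B},\mathtt{D},\mathtt{E}$ occurs in the $4$-manifold's code, the digit $\mathtt{7}$ appearing in \emph{every} cross-section code \texttt{714}, \texttt{274}, \texttt{172}, \texttt{147} can only arise from $k_S=\mathtt{F}=\mathrm{diag}(-1,-1,-1,-1)$. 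That matrix is $-\Id$ on the ball model of $\mathbb{H}^4$, which in even dimension is orientation-\emph{preserving} ($\det=+1$). Hence for these generators $\det(k_Sr_S)=-1$, not $+1$ as you compute, and your argument fails on precisely the generators that matter.

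Worse, with the correct matrices your own criterion runs backwards: $k_S=\mathtt{F}$ flips the coordinate defining $H$, so these pairings exchange the two sides of $H$ while acting orientation-preservingly on $H$ itself; they are orientation-reversing on $\mathbb{H}^4$, so $\mathrm{Stab}_{\Gamma_N}(H)\not\subset\SO^+(4,1)$, and your orbit count would then make the preimage of each cross section in $W$ a \emph{connected} double cover of $S^3\setminus L$ rather than two disjoint copies. In other words, the corrected computation does not close the gap you identified --- it points against the splitting --- and neither your write-up nor the paper's one-line proof supplies the input needed to resolve this. As it stands your proposal establishes (1) and (3) but not (2); a complete proof of (2) has to engage the four-dimensional side-pairing data and deal directly with the side-swapping $\mathtt{F}$-coded pairings.
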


\begin{proof} For (1) we note that $W$ is the orientable double cover of $N$, as such it is a characteristic cover of $N$. Now $N$ is a regular cover of ${\H}^4/\Gamma(2)$, hence $W$ is a regular cover of ${\H}^4/\Gamma(2)$. 

For (2), we have from \cref{XsectionN} that all cross sections of $N$ are isometric to the link complement $S^3\setminus L$. Being orientable, these must lift to two copies in the orientable double cover.

For (3),  we argue as in the proof of \cref{isdisjoint1}.\end{proof}
\noindent The first part of \cref{main_1011} now follows from  \cref{isdisjoint}(2).\\[\baselineskip]
\noindent  For the second part, note that from \cref{isdisjoint}(3) and (1), since $M$ is disjoint from all lifts of the cross sections, and $W$ is a regular cover of ${\H}^4/\Gamma(2)$, using the isometries of $W$ induced from the reflections in the
co-ordinate hyperplanes we get $16$ disjoint copies of $M$, all embedded and separating in $W$. Now the minimal volume of a closed hyperbolic 3-manifold is that of the Weeks manifold and is approximately $0.9427\ldots$ \cite{GMM}.
Using this estimate for $\Vol(M)$,  and applying
\cref{more_vol} we see that $\Vol(W) \geq 16\mathcal{V}'(0.94)$, which  is approximately $28.9$. On the other hand, since $\chi(W)=2$, $\Vol(W)= \frac{8\pi^2}{3}$ which is approximately $26.3$, a contradiction.\qed


\begin{rem}\label{rem:Sar} In \cite{Sar} an investigation of finite volume hyperbolic link complements of $2$-tori and Klein bottles in other smooth, closed, simply connected $4$-manifolds was initiated. Amongst other things, this work provided restrictions on the simply connected manifolds that can admit such link complements; namely they can only be homeomorphic to $S^4$, $\#_{r}(S^2\times S^2)$, or $\#_r (\mathbb{CP}^2 \# \overline{\mathbb{CP}^2})$, with $r>0$.  Furthermore, using the examples of \cite{I}, examples of link complements
of $2$-tori in $\#_{r}(S^2\times S^2)$ for $r$ even were exhibited in \cite{Sar} (these cover the manifold $W$ above).  Other examples of link complements of $2$-tori and Klein bottles in closed simply connected manifolds are also given in \cite{IRT}.   

Note that for the example $W$ of \cite{I} considered in \cref{main_1011}, it is shown in \cite{I2} that the link complement is in $S^4$ with the standard smooth
structure.  \end{rem}

\begin{rem} It is known that every closed orientable $3$-manifold embeds in $\#_{r}(S^2\times S^2)$, for some $r>0$ (see \cite[Chapter VII, Theorem 4]{Kir0}). On the other hand, we do not know whether the link complements 
of $2$-tori in $\#_{r}(S^2\times S^2)$ that cover $W$ mentioned in \cref{rem:Sar} contain a closed embedded totally geodesic hyperbolic 3-manifold.\end{rem}

Motivated by the results of this paper, these remarks and recent work on embedding (arithmetic) hyperbolic manifolds as co-dimension one totally geodesic submanifolds (see \cite{KRS} and references therein) we pose the following questions:

\begin{quest}
\label{questone}
Is there a cusped, orientable, finite volume hyperbolic $4$-manifold $W$ with $\chi(W)=1$ (or $2$) which contains a closed embedded orientable totally geodesic hyperbolic $3$-manifold? If not what is the minimal Euler characteristic of such a hyperbolic $4$-manifold?\end{quest}

\begin{quest}
\label{questtwo}
Do any of the link complements of $2$-tori in $\#_{r}(S^2\times S^2)$ that cover $W$ mentioned in Remark \ref{rem:Sar} contain a closed embedded orientable totally geodesic hyperbolic 3-manifold?\end{quest}

\begin{quest}
\label{questhree}
Does there exist a finite volume hyperbolic link complement of $2$-tori and Klein bottles in $\#_r (\mathbb{CP}^2 \# \overline{\mathbb{CP}^2})$, for some $r>0$?\end{quest}

\vfill\eject

\section{The orientable integral congruence two hyperbolic 4-manifolds}
\label{RTmanifolds}
The following table is composed from data in  \cite[Table 2]{RT0} and \cite[Table 2]{RT}.

\begin{table}[h]
\caption{The $22$ orientable integral congruence two hyperbolic $4$-manifolds of \cite{RT}}
\begin{center}
\begin{tabular}{|c|c|c|}
\hline
Number&$b_1$&Number of orientable cross-sections\\
\hline
$1$&$3$&$3$\\
\hline
$2$&$2$&$1$\\
\hline
$3$&$2$&$1$\\
\hline
$4$&$2$&$1$\\
\hline
$5$&$2$&$2$\\
\hline
$6$&$2$&$2$\\
\hline
$7$&$1$&$1$\\
\hline
$8$&$1$&$1$\\
\hline
$9$&$1$&$1$\\
\hline
$10$&$1$&$1$\\
\hline
$11$&$1$&$1$\\
\hline
$12$&$1$&$1$\\
\hline
$13$&$1$&$1$\\
\hline
$14$&$1$&$1$\\
\hline
$15$&$1$&$1$\\
\hline
$16$&$0$&$0$\\
\hline
$17$&$0$&$0$\\
\hline
$18$&$0$&$0$\\
\hline
$19$&$0$&$0$\\
\hline
$20$&$0$&$0$\\
\hline
$21$&$0$&$0$\\
\hline
$22$&$0$&$0$\\
\hline
\end{tabular}
\end{center}
\end{table}




\end{document}